\newtheorem{theorem}{Theorem}[section]
\newtheorem{lemma}[theorem]{Lemma}
\newtheorem{claim}[theorem]{Claim}
\newtheorem{corollary}[theorem]{Corollary}
\newtheorem{proposition}[theorem]{Proposition}
\theoremstyle{definition}
\newtheorem{definition}[theorem]{Definition}
\theoremstyle{remark}
\newtheorem{remark}[theorem]{Remark}
\newtheorem {question}[theorem]{Question}
\numberwithin{equation}{section}
\newtheoremstyle{noparens}%
  {}{}%
{}{}%
{\bfseries}{.}%
{ }%
{\thmname{#1}\thmnumber{ #2}\thmnote{ #3}}
\theoremstyle{noparens}
\newtheorem*{question*}{Question}
\newtheorem*{theorem*}{Theorem}
\newcommand{\uhr}{{\upharpoonright}}
\def\t{\tilde}
\def\mcal{\mathcal}
\def\mb{\mathbf}
\def\<{<^*}
\def\>{>^*}
\def\h{\hat}
\def\v{\vec}
\title{Avoid Schnorr randomness}
\author{Lu Liu}
\address{Department of Mathematics,
Central South University,
City Changsha, Hunan Province,
China. 410083}
\email{g.jiayi.liu@gmail.com}
\subjclass[2010]{Primary 68Q30 ; Secondary 03D32 03D80 28A78}
\keywords{computability theory, algorithmic randomness theory, Schnorr random, reverse math}
\begin{document}

\def\m{m}
\def\defeq{\overset{def}{=}}
\def\mbP{\mathbb{P}}
\def\U{U}
\def\Markov{Markov}
\def\mbE{\mathbb{E}}
\def\Tr{Tr}
\def\heighthomogeneous{height-homogeneous}
\def\allowsplit{allow split}
\def\DNR{\textrm{DNR}}
\def\ch{\check}
\maketitle

\begin{abstract}
We prove that every finite partition of $\omega$
admit an infinite subset that does not compute a Schnorr random real.
We use this result to answer two questions of Brendle, Brooke-Taylor, Ng and Nies
and strength a result of Khan and Miller.
\end{abstract}

\section{Introduction}

Cardinal characteristic study
has been an important direction in set theory.
The recent study of
Brendle, Brooke-Taylor, Ng and Nies\cite{brendle2015analogy}
pointed out an analog between
many results of cardinal characteristic and
results in computability theory.
We answer three questions in their paper
concerning whether it is possible
 to avoid Schnorr randomness
in \DNR.
We consider the question that whether it is possible to
avoid Schnorr randomness in an arbitrary partition
and give a yes answer. Using this result, we
answer two questions in \cite{brendle2015analogy}.
Hanssen \cite{Kjos-Hanssen2009Infinite} showed that
 for every finite partition of $\omega$,
 there exists an infinite subset
 that does not compute any $1$-random.
  \cite{kjos2019extracting} proved that this can be strengthened
 to avoid effective positive hausdorff dimension.
 But Schnorr randomness is essentially different
 in that there does not exists
 countably many computable trees so that
  every Schnorr random real is
  a path on one of them. Therefore it is not known whether
  these results  can be improved to avoid Schnorr randomness.
  Khan and Miller \cite{khan2017forcing} proved that for any order function $h$,
  there exists $\DNR_h$ that does not compute any Kurtz random real.
  Their result take advantage of the fact that all $\DNR_h$
  lies on a computable tree. But it is not known whether for any oracle
  $X$, there exists a $\DNR_h^X$ that does not compute any Schnorr random real.
 Some reference on basic knowledge of computability
 randomness theory are \cite{Nies2009Computability}\cite{Downey2010Algorithmic}.
 \def\mbV{\mb{V}}
 We state our main results and how it answers two questions  of \cite{brendle2015analogy}
 in section \ref{secmain}. The proof is given in section \ref{secproof}.
 In section \ref{secanotherques} we answer another question of
 \cite{brendle2015analogy}.

 \subsection{Preliminaries}
 For a measurable set $\mcal{A}\subseteq 2^\omega$,
 let $\m(\mcal{A})$ denote the Lebesgue measure of $\mcal{A}$;
 for $V\subseteq 2^{<\omega}$,
 let $\m(V)$ denote $\m(\cup_{\sigma\in V}[\sigma])$.

  A $k$-partition of $\omega$ is a function
 $f:\omega\rightarrow k$. For
 every infinite string $X\in l^\omega$, we also
 think of $X$ as a function from $\omega$ to $l$,
 so that it make sense to write $X^{-1}(i)$.

 Denote strings  in $2^{<\omega}$ by Greek letters $\rho, \sigma, \tau,\dots$;
 we think of binary strings as sets therefore it makes sense to write
 $\sigma\cup\tau$, $\sigma\subseteq \tau$.
 We adopt the convention that whenever we write $\sigma\subseteq \tau$,
 it implies $|\sigma|\leq |\tau|$.

We say $\sigma$ is extended by $\tau$ (written $\sigma\preceq\tau$ or $\tau\succeq \sigma$) if it is an initial segment of $\tau$.
The symbol $\prec$ is reserved for proper initial segment, including that of an infinite set $X\subseteq \omega$
(upon identifying $X$ with its characteristic function).
By $\vec{\sigma}$ we mean a finite sequence of pairwise incompatible strings $(\sigma_0,\cdots,\sigma_n)$.
   For a tree $T$,
   we write $|\rho|_T$ for the $T$-length of $\rho$, i.e.~$|\rho|_T =n+1$ where $n$ is the number of proper initial segments of $\rho$ in $T$.
   For a string $\rho \in 2^{<\omega}$, we let $[\rho]^{\preceq}=\{\sigma: \sigma\succeq \rho\}$;
similarly, for $S\subseteq 2^{<\omega}$, let $[S]^{\preceq} = \{\sigma: \sigma\succeq \rho\text{ for some }\rho\in S\}$;
for a tree $T$, let $[T]$ denote the set of infinite path on $T$
and let $[\rho]= \{X\in 2^\omega:X\succeq \rho\}$.

\section{ Subset of partition that does not compute Schnorr random real}
\label{secmain}
Let REC denote the class of all computable sets.
\begin{definition}[\cite{brendle2015analogy}]
A set $\mcal{A}\subseteq 2^\omega$ is
$A$-\emph{effectively meager} if there exists
a sequence of uniformly $\Pi_1^{0,A}$ class
$(Q_m:m\in\omega)$ so that each $Q_m$ is nowhere dense
such that $\mcal{A}\subseteq \cup_m Q_m$.
A set $A$ is \emph{weakly meager covering}
if the class REC is $A$-effectively meager.
\end{definition}
\begin{definition}
 A \emph{$A$-schnorr test} is
 a $A$-computable sequence of finite set $V_0,V_1,\cdots
 \subseteq 2^{<\omega}$ (denoted as
 $\mbV$) so that
 $\m( V_n)\leq 2^{-n} $ for all $n$.
 We say $\mbV$ \emph{succeed}
 on $X\in 2^\omega$ if $X\in \bigcap\limits_{n\in\omega}
 \bigcup\limits_{m>n}[V_m]$.
 We say $X$ is \emph{Schnorr random} if
 there does not exist Schnorr test
 succeed on $X$.

 \end{definition}
Firstly, by results in
\cite{brendle2015analogy}\cite{Rupprecht2010}\cite{Kjos-Hanssen2011Kolmogorov},
 weakly meager covering is characterized  as following.
\begin{theorem}\label{schth0}
A set $A$ is weakly meager covering if and only if
it is high or of $\DNR$ degree.
\end{theorem}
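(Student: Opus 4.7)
The plan is to prove each direction separately, drawing on the characterizations already established in \cite{brendle2015analogy}, \cite{Rupprecht2010}, and \cite{Kjos-Hanssen2011Kolmogorov}. The statement essentially asserts that the cardinal-characteristic notion of weakly meager covering collapses, in the computability-theoretic setting, to the familiar combined class of high degrees together with DNR degrees, so the proof is a matter of assembling the relevant reductions.

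For the reverse implication "high or DNR $\Rightarrow$ weakly meager covering", I would treat the two cases independently. If $A$ is of DNR degree, fix an $A$-computable DNR function $f$. The plan is to build, for each $m$, a nowhere dense $\Pi_1^{0,A}$ class $Q_m$ consisting of sets whose values on a sparse pattern of indices are locked to match $f$; by choosing the pattern to cover every partial-computable index above $m$, every total computable set $\varphi_e$ is forced into $Q_m$ whenever $m>e$, since $f$ diagonalizes against $\varphi_e(e)$. If instead $A$ is high, invoke Martin's domination theorem to obtain an $A$-computable function $g$ dominating every computable function, then effectively enumerate (relative to $A$) the computable sets via their $g$-bounded approximations; a standard diagonal construction turns this enumeration into the required sequence of nowhere dense $\Pi_1^{0,A}$ classes covering REC.

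For the forward implication, suppose $A$ is neither high nor DNR, and let $(Q_m)$ be any sequence of nowhere dense $\Pi_1^{0,A}$ classes; the goal is to produce a computable $R \notin \bigcup_m Q_m$ by finite extensions. At stage $m$ with current prefix $\sigma_m$, nowhere denseness guarantees some extension $\tau \succeq \sigma_m$ with $[\tau]\cap Q_m = \emptyset$, but the search for $\tau$ must be executed computably. The absence of DNR degree in $A$ forces any $A$-computable search procedure for such a witness to admit a computable shadow along an infinite subsequence of stages, while the failure of $A$ being high prevents $A$ from dictating the whole stage schedule in a $\emptyset'$-approximable way. A dovetailed finite-extension argument combining these two features should produce the desired $R$.

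The main obstacle is the forward direction: the two hypotheses ($A$ is not high, $A$ is not DNR) are separately about defeating domination or diagonalization, and one must convert them into a single, uniformly computable extension strategy that evades every $Q_m$ simultaneously. I expect the cleanest route is by contrapositive — showing directly that weakly meager covering yields either a dominant function modulo computable or a DNR function reducible to $A$ — which is in fact the form in which the equivalence is likely packaged in the cited literature.
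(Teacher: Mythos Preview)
The paper does not prove this theorem at all: it is stated purely as a citation of known results from \cite{brendle2015analogy}, \cite{Rupprecht2010}, and \cite{Kjos-Hanssen2011Kolmogorov}, with no argument given. So there is nothing to compare your proposal against in terms of the paper's own proof.

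That said, a brief comment on your sketch. The reverse direction is essentially right in outline: highness gives a dominant function and hence an $A$-effective listing of all computable sets, while a DNR function lets you build the required nowhere dense classes. Your forward direction, however, is not really a proof but a restatement of the difficulty. Saying that ``the absence of DNR degree in $A$ forces any $A$-computable search procedure \dots\ to admit a computable shadow'' and that non-highness ``prevents $A$ from dictating the whole stage schedule'' does not yet isolate a concrete mechanism for building the computable $R$ avoiding every $Q_m$. The actual argument in the literature goes through an intermediate characterization (e.g., computing a function of effective Hausdorff dimension, or the Kjos-Hanssen--Merkle--Stephan equivalence of DNR with infinitely-often incompressibility), and the ``high or DNR'' disjunction arises because one case of a forcing-style split yields a dominant function and the other yields a DNR function. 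Your contrapositive instinct is correct, but the proposal as written does not contain the key idea that makes that split work; you would need to specify exactly what $A$-computable object the sequence $(Q_m)$ hands you and how failure of that object to be dominant forces it (or something derived from it) to be DNR.
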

The reason we concern partition instead of \DNR\ is
following.

\begin{theorem}[\cite{Hirschfeldt2008strength}]\label{schth1}
For any oracle $A$, there exists a $2$-partition $f:\omega\rightarrow 2$ of
$\omega$ such that  every infinite subset $G$ of $f$ is of $\DNR^A$ degree.
\end{theorem}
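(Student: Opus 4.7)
This is a known result and the plan is to sketch the standard construction. The strategy is to build $f:\omega\to 2$ $A$-effectively by a block construction: partition $\omega$ into consecutive finite intervals $I_0<I_1<\cdots$ whose sizes grow in a controlled way, and design $f|_{I_e}$ so that, from any infinite homogeneous subset $G$ of either color class, one can $G$-compute a value $h(e)$ diagonalizing against $\Phi_e^A(e)$.

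First I would fix a uniform $G$-effective scheme for $h$: a natural candidate is $h(e)=\min(G\cap I_e)$, or a simple coding of the position of $G\cap I_e$ within one of $I_e$'s color classes. Then I would choose $f|_{I_e}$ so that every admissible value of $h(e)$---ranging over all choices of $G\cap I_e$ of either color---differs from $\Phi_e^A(e)$. Since both $f^{-1}(0)$ and $f^{-1}(1)$ must support the decoding, enough slots of each color are reserved in $I_e$; positions of $\omega$ outside the $I_e$ may be colored arbitrarily, as they play no role in the diagonalization.

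The main obstacle is that $\Phi_e^A(e)$ is only $A$-c.e., not $A$-computable, so the construction cannot simply ``query and avoid'' its value while defining $f$. I would handle this by a finite-injury priority argument: place a provisional coloring on $I_e$, and revise it at the first stage where $\Phi_e^A(e)[s]$ converges, moving to a coloring that avoids the newly revealed value and pushing $I_e$ (and later $I_{e'}$ for $e'>e$) further out if needed to maintain disjointness from previously committed blocks. Since each $\Phi_e^A(e)$ converges at most once, the revisions for each requirement stabilize and the finished $f$ is total.

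An alternative is to relativize the original Hirschfeldt et al.\ argument verbatim to the oracle $A$: that construction produces a computable 2-coloring of $\omega$ whose every infinite homogeneous set computes a DNR function, and the relativization directly yields the desired $f$ with the $\DNR^A$ property. The key point used in the sequel is only the existence of such $f$, so either route suffices.
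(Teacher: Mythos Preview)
The paper does not prove this theorem; it is cited from \cite{Hirschfeldt2008strength} and used as a black box, so there is no proof in the text to compare against. Your closing remark---simply relativize the original construction---is correct and is all that is needed here.

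Your primary sketch, however, has a genuine gap. Any scheme that reads $h(e)$ from $G\cap I_e$ alone fails because an infinite $G\subseteq f^{-1}(i)$ need not meet a given block: $G$ may be arbitrarily sparse inside its color class, so $G\cap I_e=\emptyset$ is possible for any particular $e$ (indeed for infinitely many $e$). In that case $h(e)$ is undefined, or takes a fixed default value (e.g.\ the code of $\emptyset$), and that default cannot be steered away from $\Phi_e^A(e)$ by adjusting $f\upharpoonright I_e$. Reserving ``enough slots of each color'' in $I_e$ only guarantees $f^{-1}(i)\cap I_e\neq\emptyset$, not $G\cap I_e\neq\emptyset$. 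The finite-injury layer does not address this and introduces a second difficulty: if the blocks $I_e$ move during the construction, $G$ alone cannot locate them and hence cannot compute $h$. (Note also that the statement only requires such an $f$ to \emph{exist}; one may build it directly from $A'$ with no injury at all.)

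A correct route, in the spirit of \cite{Hirschfeldt2008strength}, avoids block-decoding entirely: construct a $\Delta^0_2(A)$ set $C$ so that both $C$ and $\overline{C}$ are effectively immune relative to $A$ (a straightforward priority construction). From any infinite $G\subseteq C$ (or $G\subseteq\overline{C}$) one then $G$-computably obtains a fixed-point-free function relative to $A$ by outputting canonical indices for sufficiently long finite initial segments of $G$; by Arslanov's criterion this is degree-equivalent to $\DNR^A$. No block-hitting is required of $G$.
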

Here comes our main result.
\begin{theorem}\label{schth4}
For every $k$-partition $f:\omega\rightarrow k$,
there exists an infinite subset $G$ of $f$ such that
$G$ does not compute any Schnorr random real.
\end{theorem}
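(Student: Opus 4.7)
I plan to construct $G$ by a Mathias-style forcing inside an infinite color class of $f$, concurrently enumerating a plainly computable Schnorr test that will capture every total $\Phi_e^G$. By pigeonhole, fix $i<k$ with $X_0:=f^{-1}(i)$ infinite. Conditions are pairs $(\sigma,X)$ with $\sigma\subseteq X_0$ finite, $X\subseteq X_0$ infinite, and $\max\sigma<\min X$; extensions are the usual Mathias extensions, and a sufficiently generic filter produces an infinite $G\subseteq X_0\subseteq f^{-1}(i)$.

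The test is built from the uniformly $\Pi^0_1$ classes
\[
B^e_\sigma := \{Z\in 2^\omega : \forall s\ \exists\text{ finite }F\supseteq\sigma,\ \Phi_e^F\uhr s\succeq Z\uhr s\},
\]
which decrease under stem extension and contain $\Phi_e^G$ whenever $G\supseteq\sigma$ and $\Phi_e^G$ is total. Since a rational clopen approximation to $B^e_\sigma$ is uniformly enumerable in $(e,\sigma)$ and $\m(B^e_\sigma)$ is upper semicomputable, provided the stems $\sigma$ appearing along $G$ satisfy $\m(B^e_\sigma)\to 0$ we may set $V^{(e)}_m$ to such a clopen approximation of measure $\le 2^{-m}$, yielding a computable Schnorr test $\mathbf{V}^{(e)}$. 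The individual $\mathbf{V}^{(e)}$ are amalgamated into a single Schnorr test via the standard measure shift $V_n=\bigcup_{e\le n}V^{(e)}_{n+e+1}$.

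The central technical step is the shrinkage lemma: for every condition $(\sigma,X)$, every $e$, and every $m$, one can extend to $(\tau,Y)\le(\sigma,X)$ such that either $(\tau,Y)$ forces $\Phi_e^G$ partial, or $\m(B^e_\tau)<2^{-m}$. I would prove it by contradiction: if no such extension existed, one could iteratively branch the stem inside the reservoir $X$ to produce infinitely many pairwise incompatible stem-extensions whose associated $B^e$-classes contribute disjoint positive-measure slices, eventually contradicting $\m(B^e_\sigma)\le 1$. This parallels the measure-shrinking step in the Kjos-Hanssen argument for avoiding $1$-randomness, but must carry effective measure bookkeeping fine enough for a Schnorr test rather than a Martin-L\"of test.

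The main obstacle I anticipate is ensuring the resulting Schnorr test is plainly computable rather than only $f$-computable, since the stems along the generic are chosen using $f$. My plan is to dovetail the forcing with a uniform computable search for finite stems $\sigma$ witnessing $\m(B^e_\sigma)<2^{-m}$ (an effectively enumerable event), and arrange the density requirements of the forcing so that every condition can be extended through one of the computably-enumerated candidate stems. Managing the interaction between this computable search — which knows nothing about $f$ — and the $f$-dependent forcing, so that candidate stems can be absorbed into the Mathias stem while remaining inside the color class $X_0$, is the subtlest point, and I expect it to drive the density argument of the construction.
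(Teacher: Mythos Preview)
Your proposal has a genuine gap at precisely the point you flag as ``the subtlest point,'' and I do not believe it can be closed within the framework you describe.

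The difficulty is that you commit to a single color class $X_0=f^{-1}(i)$ at the outset and then run Mathias forcing inside it. Since $f$ is an \emph{arbitrary} $k$-partition, $X_0$ may have arbitrarily high Turing degree, and so do the reservoirs and stems produced by your forcing. Your Schnorr test, however, must be plainly computable. Your plan is to run a computable search for stems $\sigma$ with $\m(B^e_\sigma)<2^{-m}$ and then ``absorb'' a found $\sigma$ into the Mathias stem. But the computable search knows nothing about $X_0$: the first (or the $n$th, or any computably selected) such $\sigma$ need not be a subset of $X_0$, so it cannot be absorbed, and you have no computable way to recognise which candidates lie inside $X_0$. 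Conversely, if you let the forcing choose the stem and then read off a clopen cover of $B^e_\tau$, the resulting test is $X_0$-computable, not computable. Enlarging $V^{(e)}_m$ to cover \emph{all} small-measure stems destroys the measure bound. This is exactly why the Kjos-Hanssen argument yields only a Martin-L\"of test (where one may c.e.-enumerate covers without controlling the index set) and does not extend to Schnorr tests by bookkeeping alone.

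The paper avoids this by \emph{not} fixing a color in advance. Its conditions carry, for every color $i<\check k$ and every branch $s$ of an auxiliary finitely branching tree, a stem $\sigma^i_s$, together with a c.e.\ set $S$ and a computable function $C$ recording which upward-closed $\Sigma^0_1$ classes have been committed to remain ``large.'' Largeness of a $\Sigma^0_1$ class is itself $\Sigma^0_1$, so the Schnorr test in Lemma~\ref{schlem6} is produced by a purely computable search over $S$ and finite $V\subseteq 2^N$, never consulting $f$. The role your reservoir $X\subseteq X_0$ would play is taken over by the largeness machinery, and the commitment to a particular color $i^*$ is deferred to the very end via a Seetapun-style pigeonhole on the tree of valid branches. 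In short: the missing idea in your outline is that one must replace the single non-computable reservoir by an effective, color-agnostic surrogate (largeness tracked by $S,C$) so that the test can be defined without any access to $f$.
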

Meanwhile, it's trivial to verify that
\begin{proposition}\label{schth3}
There exists a $2$-partition $f:\omega\rightarrow 2$ such that
every infinite subset $G$ of $f$ is of hyperimmune degree.
\end{proposition}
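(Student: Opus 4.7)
The plan is to define $f = \chi_H$ for a carefully chosen set $H \subseteq \omega$, and handle the two sides $f^{-1}(1) = H$ and $f^{-1}(0) = \omega \setminus H$ separately. For the first side, I would require $H$ to be hyperimmune as a set, meaning the principal function $p_H$ (listing the elements of $H$ in increasing order) is not dominated by any computable function. Then for any infinite $G \subseteq H$, the principal function $p_G$ satisfies $p_G(n) \geq p_H(n)$ pointwise, so $p_G$ is also not dominated by any computable function; since $G$ uniformly computes $p_G$, this shows $G$ is of hyperimmune degree.

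The harder side is $G \subseteq \omega \setminus H$. Because $H$ being hyperimmune forces $H$ to have density $0$, the set $\omega \setminus H$ has density $1$, so the principal function of any infinite subset of $\omega \setminus H$ is bounded by $n \mapsto n + C$ for some constant $C$, and hence is dominated by a computable function. A different argument is therefore needed: I would construct $H$ so that, for every Turing functional $\Phi_e$ and every (partial) computable $\phi_j$, there is no infinite $G \subseteq \omega \setminus H$ with $\Phi_e^G$ total and dominated by $\phi_j$. This can be accomplished by a Mathias-style forcing with conditions $(\sigma, X)$, where $\sigma$ is a finite initial segment of $\chi_H$ and $X$ is a reservoir of integers still free to be committed to either side; one diagonalizes stage by stage against each pair $(\Phi_e, \phi_j)$, shrinking $X$ so that every extension either makes $\Phi_e^G$ partial on all infinite continuations, or forces some value $\Phi_e^G(n) > \phi_j(n)$.

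The main obstacle is arranging the two groups of requirements to coexist: $H$ must remain sparse enough to be hyperimmune as a set, while simultaneously $\omega \setminus H$ must contain enough committed positions to rule out all hyperimmune-free infinite subsets. The second family of requirements is $\Pi^0_3$ in nature, since it quantifies over all infinite subsets, so a plain finite-injury construction may not suffice. A sufficiently generic $H$ in the Mathias-style forcing, however, should satisfy both: genericity with respect to the countable family of dense sets that enforce these requirements yields, in the limit, a set $H$ such that $p_H$ escapes every fixed computable dominator and such that no infinite $G \subseteq \omega \setminus H$ has all its $G$-computable functions dominated by computable ones.
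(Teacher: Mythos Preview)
Your central claim---that hyperimmunity of $H$ forces $H$ to have density~$0$---is false, and this is what sends you down an unnecessarily complicated path. Hyperimmunity of $H$ (i.e., $p_H$ not dominated by any computable function) only requires that for every computable $g$ there be infinitely many $n$ with $p_H(n) > g(n)$; between those escape points $H$ may contain arbitrarily long intervals. Consequently it is perfectly possible for \emph{both} $f^{-1}(0)$ and $f^{-1}(1)$ to have non-dominated principal functions simultaneously. For instance, fix a sequence $0 = a_0 < a_1 < a_2 < \cdots$ growing so fast that $n\mapsto a_n$ is not dominated by any computable function, and set $f^{-1}(0) = \bigcup_k [a_{2k}, a_{2k+1})$ and $f^{-1}(1) = \bigcup_k [a_{2k+1}, a_{2k+2})$. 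Each side then has a principal function with jumps at the block boundaries that no computable function can dominate, and any infinite $G \subseteq f^{-1}(i)$ inherits this via $p_G(n) \geq p_{f^{-1}(i)}(n)$.

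This is exactly the paper's one-line proof: simply arrange that the principal functions of both $f^{-1}(0)$ and $f^{-1}(1)$ escape domination. Your proposed Mathias-style construction for the ``harder'' side is therefore unnecessary. It also contains a separate slip: the sentence ``the principal function of any infinite subset of $\omega \setminus H$ is bounded by $n \mapsto n + C$'' cannot be right, since subsets of any infinite set may be arbitrarily sparse; presumably you meant the principal function of $\omega \setminus H$ itself, which is what would block the inheritance argument on that side if your density claim were true.
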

\begin{proof}
Simply make sure that the principal function of
$f^{-1}(0),f^{-1}(1)$ is not dominated by any computable function.
\end{proof}

Now we can answer Question 4.1-(6) of \cite{brendle2015analogy}.
\begin{corollary}
There exists a set $G$ such that
\begin{enumerate}
\item $G$ is weakly meager covering;

\item $G$ does not compute any Schnorr random real;
\item $G$ is of hyperimmune degree.
\end{enumerate}
\end{corollary}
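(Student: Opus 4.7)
The plan is to combine the three previous results by taking a common refinement of two carefully chosen partitions, then extracting the desired subset via Theorem \ref{schth4}. Let $f_1:\omega\to 2$ be the $2$-partition provided by Theorem \ref{schth1} applied with $A=\emptyset$, so that every infinite subset of $f_1$ is of $\DNR$ degree. Let $f_2:\omega\to 2$ be the $2$-partition provided by Proposition \ref{schth3}, so that every infinite subset of $f_2$ is of hyperimmune degree.

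Define the $4$-partition $f:\omega\to 4$ by $f(n)=\langle f_1(n),f_2(n)\rangle$. Then every infinite subset $H$ of $f$ is contained in a single block $f_1^{-1}(i_1)\cap f_2^{-1}(i_2)$; in particular, $H$ is simultaneously an infinite subset of $f_1$ and of $f_2$. Consequently, any infinite subset of $f$ is automatically both of $\DNR$ degree and of hyperimmune degree.

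Now apply Theorem \ref{schth4} to $f$ to obtain an infinite subset $G$ of $f$ that does not compute any Schnorr random real. By the observation above, $G$ is of $\DNR$ degree, hence weakly meager covering by Theorem \ref{schth0}, and $G$ is of hyperimmune degree. Thus $G$ witnesses all three requested properties.

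There is no real obstacle beyond verifying that the joint refinement behaves correctly: the only thing to check is the elementary fact that an infinite subset of $f$ is also an infinite subset of each $f_j$, which is immediate from the product construction. All the hard work is already contained in Theorem \ref{schth4}; the corollary is essentially a packaging argument.
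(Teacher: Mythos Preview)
Your proof is correct and follows essentially the same approach as the paper: form the common refinement of the two $2$-partitions from Theorem \ref{schth1} and Proposition \ref{schth3}, apply Theorem \ref{schth4} to the resulting $4$-partition, and read off the three properties. The only difference is cosmetic: the paper simply says ``let $G$ be an infinite subset of both $f_0,f_1$ as in Theorem \ref{schth4}'' without spelling out the product construction, whereas you make that step explicit.
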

\begin{proof}

Let $f_0, f_1$ be as in Theorem \ref{schth1},
Proposition
\ref{schth3} respectively.
Let $G$
 be an infinite subset of both $f_0,f_1$
 as in Theorem \ref{schth4} so that it does not compute
 any Schnorr random real.
 By definition of $f_0,f_1$, $G$ is of $\DNR$ degree
 and is therefore weakly meager covering by Theorem \ref{schth0};
 and $G$ is of hyperimmune degree.
 Thus we are done.

\end{proof}

It is also clear that we can strengthen
Theorem 4.2 of \cite{khan2017forcing} as following.
\begin{corollary}
For any oracle $X$, there exists
a \DNR$^X$ that does not compute any Schnorr random real.
\end{corollary}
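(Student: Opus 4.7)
The plan is to combine Theorem \ref{schth1} and Theorem \ref{schth4} exactly as was done in the previous corollary, this time keeping careful track of the relativization to $X$.

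First, given an arbitrary oracle $X$, I apply Theorem \ref{schth1} with $A=X$ to obtain a $2$-partition $f:\omega\to 2$ such that every infinite subset of $f$ is of $\DNR^X$ degree. Next, I apply Theorem \ref{schth4} to this particular $f$ to extract an infinite subset $G\subseteq f^{-1}(i)$ (for some $i<2$) such that $G$ does not compute any Schnorr random real.

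Now I use the conclusion of the first step: since $G$ is infinite in $f$, it is of $\DNR^X$ degree, so there exists a $\DNR^X$ function $h$ with $h\equiv_T G$; in particular $h\leq_T G$. By transitivity of Turing reducibility, any Schnorr random computed by $h$ would also be computed by $G$, contradicting the choice of $G$. Hence $h$ is the desired $\DNR^X$ function that computes no Schnorr random real.

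There is essentially no obstacle: the whole content of the statement is that Theorem \ref{schth4} is strong enough (uniform over partitions rather than tied to a specific computable tree) to be fed the partition produced by the relativized Hirschfeldt--Jockusch--Kjos-Hanssen--Lempp--Slaman construction of Theorem \ref{schth1}. Since Theorem \ref{schth4} is stated with no restriction on how the partition is obtained, this combination is immediate and no further computability-theoretic work is needed; this is exactly why the authors advertise this as an improvement of \cite{khan2017forcing}, where the argument was restricted to functions lying on a fixed computable tree.
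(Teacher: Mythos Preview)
Your proof is correct and is exactly the argument the paper has in mind: the paper does not spell out a proof for this corollary, merely noting that it is ``clear'' from the machinery just developed, and your combination of Theorem~\ref{schth1} (relativized to $X$) with Theorem~\ref{schth4} is precisely the intended one-line derivation, parallel to the proof of the preceding corollary.
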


Given a countable collection
$\mcal{A}\subseteq 2^\omega$,
we say $X\in 2^\omega$ is \emph{$\mcal{A}$-cohesive}
if for every $A\in\mcal{A}$, either
$X\subseteq^* A$ or $X\subseteq^* \overline{A}$.
\cite{brendle2015analogy} section 5.3 asks whether there exists
a set $G$ being REC-cohesive such that $G$ does not compute
Schnorr random real.
We here gives a positive answer.
\begin{theorem}\label{schth5}
For any countable collection $\mcal{A}\subseteq 2^\omega$
with $A$ being $ \Delta_2^0$ for all $A\in \mcal{A}$,
there exists an infinite $\mcal{A}$-cohesive set $G$ such that
$G$ does not compute any Schnorr random real.
\end{theorem}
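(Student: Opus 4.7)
The plan is to use a Mathias-style cohesive forcing with $\Delta_2^0$ reservoirs, combined with the Schnorr-avoidance technique underlying Theorem \ref{schth4}. A condition is a pair $(F, X)$ where $F\subseteq\omega$ is finite, $X$ is an infinite $\Delta_2^0$ subset of $\omega$, and $\max F < \min X$; an extension $(F', X')$ satisfies $F\subseteq F'$, $F'\setminus F\subseteq X$, and $X'\subseteq X$. The desired $G$ is built as $\bigcup_s F_s$ along a descending sequence of conditions $(F_s, X_s)$ meeting two families of requirements: cohesiveness with each $A_n\in\mcal{A}$, where we enumerate $\mcal{A}=\{A_n:n\in\omega\}$; and Schnorr-avoidance for each Turing functional $\Phi_e$ in a fixed enumeration.

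Cohesiveness is handled by the standard $\Delta_2^0$-cohesive move: since $A_n$ is $\Delta_2^0$, at least one of $X\cap A_n$ and $X\cap\overline{A_n}$ is an infinite $\Delta_2^0$ subset of $X$; replacing $X$ by such a subset forces every extension to satisfy $G\subseteq^* A_n$ or $G\subseteq^*\overline{A_n}$. Schnorr-avoidance is the substantive step, and it relies on the following Mathias-conditional version of the main combinatorial ingredient of Theorem \ref{schth4}: for every condition $(F, X)$ and functional $\Phi$, either (i) there is an extension $(F', X')$ of $(F, X)$ that forces $\Phi^G$ to be partial (on some specific input), or (ii) there is a computable Schnorr test $\mbV$ that succeeds on $\Phi^H$ for every infinite $H$ with $F\prec H\subseteq F\cup X$. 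In case (i) we take the extension; in case (ii) the current condition already forces $\Phi^G$ to be non-Schnorr-random whenever total.

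The main obstacle is establishing this dichotomy when the reservoir $X$ is only $\Delta_2^0$, since the Schnorr test in alternative (ii) must be \emph{absolutely} computable rather than merely $X$-computable in order to witness that the final $G$ does not compute any Schnorr random real. The proof of Theorem \ref{schth4} establishes an analogous dichotomy for reservoirs arising from a general partition $f$, with an absolutely computable test obtained from a measure-theoretic analysis of the cylinders hit by $\Phi^H$ as $H$ ranges over the permitted extensions of $F$. One has to check that the test construction depends on $F$ and $\Phi$ in a way that remains computable even when the set of permitted $H$ is governed by a $\Delta_2^0$ reservoir, and symmetrically that when (ii) fails the witnesses to partiality are rich enough to allow thinning $X$ to a $\Delta_2^0$-infinite subset inside which case (i) is realised. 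Once the dichotomy is secured, a routine diagonal interleaving of the cohesiveness and Schnorr-avoidance requirements yields the desired infinite $\mcal{A}$-cohesive set $G$ not computing any Schnorr random real.
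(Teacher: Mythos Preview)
Your proposal has a genuine gap at the Schnorr-avoidance step. The dichotomy you posit --- either force $\Phi^G$ partial, or produce a \emph{computable} Schnorr test succeeding on $\Phi^H$ for every infinite $H$ with $F\prec H\subseteq F\cup X$ --- is not what the proof of Theorem \ref{schth4} delivers, and there is no reason to expect it to hold. That proof does not use plain Mathias conditions; each condition carries a c.e.\ set $S$ and a computable function $C$ recording, along a forest $T_S$, how largeness of intersections of upward-closed $\Sigma^0_1$ classes grows. The Schnorr test in Lemma \ref{schlem6} is built by a \emph{computable} search through $S$ for pairs $(\rho,l)$ and small sets $V$ witnessing $l$-largeness of a certain class (Claim \ref{schclaim1}); its computability comes precisely from $S$ being c.e.\ and $C$ computable. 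Moreover, what is forced is not ``$\mbV$ succeeds on $\Phi^H$ for all $H$ in the condition'' but $\Phi_{\Psi,\mbV}$ in the sense of Definition \ref{schdef0}(5), which mentions $C$; converting this to truth requires the validity machinery and the $2$-generic sequence (Lemma \ref{schlem7}). If you collapse all of this to a bare $\Delta^0_2$ reservoir $X$, the analogous search asks whether finite $\tau\subseteq X\setminus|\sigma|$ with certain properties exist, which is $\Sigma^0_2$ rather than $\Sigma^0_1$, so the test you obtain is only $X$-computable. You flag exactly this obstacle but then assert it is ``a check''; the paper's remark immediately after the proof of Theorem \ref{schth5} (that dropping the $\Delta^0_2$ hypothesis on $\mcal{A}$ is open) shows that this effectiveness is the heart of the matter, not a detail.

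The paper's actual argument keeps the full forcing of Theorem \ref{schth4} intact and handles cohesiveness not by intersecting a single reservoir with $A$ but by refining the tree component: given $A\in\mcal{A}$, one uses a c.e.\ tree $T$ with $[T]=\{A\}$ (available since $A$ is $\Delta^0_2$) to split each branch, exactly as in the proof of Lemma \ref{schlemforcing}(1), so that in the extension every $Y\in[T_{\h S}]$ and every branch $\h s$ satisfies $Y^{-1}(\h s)\subseteq^* A$ or $Y^{-1}(\h s)\subseteq^*\overline{A}$. This preserves the c.e.-ness of $\h S$ and the computability of $\h C$, so Lemma \ref{schlem6} continues to apply to later conditions and the Schnorr tests remain absolutely computable. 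Your plan discards the multi-branch, largeness-tracking structure and thereby loses exactly the effectiveness that makes the construction work.
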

The proof of Theorem \ref{schth4} and \ref{schth5}
are similar. Both concern a modified version
of CJS style Seetapun foricing. One of the most well known
application of CJS method is
 to show that every $\Delta_2^0$ $k$-partition
of $\omega$ admit an infinite subset that
is low$_2$ \cite{Cholak2001strength}. Recently, Monin and Patey \cite{monin2019pigeons} carry out
a modified version of CJS style Seetapun foricing
and use it to prove a jump avoidance result.
Our construction can be seen as an effectivization
of their version.

\section{Proof of Theorem \ref{schth4} and \ref{schth5}}
\label{secproof}

Both proof are by constructing a sequence of condition
each forces a given requirement.
We firstly and mainly prove Theorem \ref{schth4},
and Theorem \ref{schth5}
is proved in exactly the same fashion.
As usual, a condition is seen as a collection of the candidates
of the weak solution we construct.
\begin{enumerate}
\item We firstly define condition, extension and forcing.
\item We establish some basic facts concerning forcing.
The key facts among them are: (a) for each $\Pi_2^0$
formula $\Phi$, a condition can be extended to force
$\Phi$ or $\neg\Phi$
(Lemma \ref{schlemforcing});
(b) forcing  a formula implies truth provided
 the set $G$ is constructed through a sufficiently generic filter
 (Lemma \ref{schlem7}).
 This part concerns a concept called valid
 (definition \ref{schdef3}).

\item Thirdly, we deal with requirements
concerning avoiding Schnorr randomness.
We show that if a condition forces a Turing functional
to be total, then it can be extended to force
a given requirement (Lemma \ref{schlem6}).

\end{enumerate}

Fix a $\ch k$-partition $\ch f:\omega\rightarrow \ch k$.

\begin{definition}
For $l\geq 1$, a class $\mcal{U}\subseteq 2^\omega$
is $l$-\emph{large}  if for every
$l$-partition $f$ of $\omega$, there exists an
$i<l$ such that $f^{-1}(i)\in \mcal{U}$. We say
$\mcal{U}$ is \emph{large} if it is $l$-large
for all $l\in\omega$.
\end{definition}
\begin{lemma}\label{schlem4}

If $\bigcap\limits_{e\in C}\mcal{U}_e$ is not large
for some countable set $C$,
then there  exists a finite subset $\t{C}$ of $C$
such that $\bigcap\limits_{e\in \t{C}}\mcal{U}_e$
is not large.

\end{lemma}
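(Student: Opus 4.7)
The plan is to unpack the definition of \emph{not large} and observe that any witness to it is a finitary object that only mentions finitely many indices. Specifically, \emph{not large} means there is some $l \in \omega$ and some $l$-partition $f:\omega\to l$ such that $f^{-1}(i) \notin \bigcap_{e\in C} \mcal{U}_e$ for every $i<l$. Because the partition has only $l$ parts, the bad index $e_i \in C$ witnessing $f^{-1}(i) \notin \mcal{U}_{e_i}$ needs to be chosen for only finitely many values of $i$.

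First I would fix such an $l$ and $l$-partition $f$ witnessing that $\bigcap_{e\in C} \mcal{U}_e$ fails to be $l$-large. For each $i<l$, since $f^{-1}(i)$ lies outside the intersection, I can select some $e_i \in C$ with $f^{-1}(i) \notin \mcal{U}_{e_i}$. Then I would set $\t{C} = \{e_0,\dots,e_{l-1}\}$, a finite subset of $C$. For any $i<l$, since $e_i \in \t{C}$, we have $\bigcap_{e\in\t{C}} \mcal{U}_e \subseteq \mcal{U}_{e_i}$, and hence $f^{-1}(i) \notin \bigcap_{e\in\t{C}} \mcal{U}_e$. So the same partition $f$ witnesses that $\bigcap_{e\in\t{C}} \mcal{U}_e$ is not $l$-large, and in particular not large.

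I do not expect any real obstacle. The lemma is essentially a compactness observation: the property of being $l$-large is determined by testing against $l$-partitions, and each such test only sees finitely many sets $f^{-1}(i)$, so only finitely many $e$'s can contribute to excluding them. The only thing to be careful about is distinguishing the quantifier structure in the definition of \emph{large} (over all $l$, and over all $l$-partitions) from the one fixed witness $(l,f)$ that a single failure produces; once that is clear, the argument is a one-paragraph selection.
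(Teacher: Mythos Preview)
Your argument is correct. It is in fact more direct than the paper's own proof: the paper argues by contradiction, taking an increasing exhaustion $C_0\subseteq C_1\subseteq\cdots$ of $C$ by finite sets, assuming each $\bigcap_{e\in C_n}\mcal{U}_e$ is large, and then using pigeonhole on the finitely many parts $f^{-1}(i)$ to find one that lies in infinitely many (hence all) of these intersections, contradicting the choice of $f$. Your approach bypasses the contradiction and the exhaustion entirely by simply selecting, for each of the $l$ parts, a single index $e_i\in C$ excluding it; this yields the finite $\t{C}$ in one step. Both arguments rest on the same finitary observation that an $l$-partition has only $l$ parts, but yours isolates that observation more cleanly.
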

\begin{proof}

Let $f:\omega\rightarrow l$ be a partition of $\omega$ witnessing
 that $\bigcap\limits_{e\in C}\mcal{U}_e$ is not large. Suppose otherwise.
Let $C_n,n\in\omega$ be an increasing array of finite subset
of $C$ such that $\cup_n C_n=C$.
By the otherwise assumption, there must exist a $i<l$ such that for infinitely many $n$,
$f^{-1}(i)\in \bigcap\limits_{e\in C_n}\mcal{U}_e$.
But this means $f^{-1}(i)\in \bigcap\limits_{e\in C}\mcal{U}_e$,
a contradiction.

\end{proof}

From now on, for every $e\in\omega$, let $\mcal{U}_e\subseteq 2^\omega$ denote
the $e^{th}$ upward closed $\Sigma_1^0$ class.
The \emph{condition} we use is a tuple
$(\sigma^i_s, S, C: i<\ch k,s<r)$
such that
\begin{enumerate}
\item For each $s<r,i<\ch k$, $\sigma^i_s\subseteq \ch f^{-1}(i)$
for all $i<\ch k$; 

\item The set $S\subseteq r^{<\omega}\times \omega$ is a c.e. set
such that the projection of $S$ on $r^{<\omega}$,
namely $T_S$, is an infinite forest over a finite prefix free set $B$;
moreover for every $(\rho,l)\in S$, $l\geq |\rho|$
and $(\h\rho,l)\in S$ for all $\h\rho\preceq\rho\wedge\h\rho\in [B]^\preceq$;

\item The function $C: S\rightarrow \omega$ is computable
such that for every $(\rho,l)\in S$, $C(\rho,l)$ is seen as
the canonical index of a finite set; moreover,
$\bigcap_{e\in C(\rho,l)}\mcal{U}_e$ is $l$-large;

\item For every $\h l>l,\h\rho\succeq \rho$ with
$(\rho,l),(\h\rho,l),(\rho,\h l)\in S$, we have
$C(\h\rho,l)\supseteq C(\rho,l)\wedge
C(\rho,\h l)\supseteq C(\rho,l)$.

\end{enumerate}

\begin{remark}
In \cite{monin2019pigeons}, the role of $S$ is played by
a single $\Delta_2^0$ set controlling the jump of the
constructed solution. Due to the effectiveness of
Schnorr test, we will have to monitor the jump control more effectively.
As required by item (3), $S$ is such an effective way
to monitor the how the largeness  grow along path through  $T_S$.
The constructed solution $G$ will be a subset of some $Y\in [T_S]$.

Intuitively, each condition $d=(\sigma^i_s, S, C: i<\ch k,s<r)$
represents a collection of the candidates of the solution $G$ we construct, namely:
$\cup_{i<\ch k,r<s} [d^i_s]$ where
\begin{align}\nonumber
[d^i_s]= \big\{
\h G\in 2^{<\omega}\cup 2^\omega: \text{for some }Y\in [T_S],
\h G\succeq \sigma^i_s\wedge \h G\subseteq Y^{-1}(s)\cup \sigma^i_s
\big\}.
\end{align}

\end{remark}

A simple and intuitive observation is that, by Lemma \ref{schlem4}:
\begin{align}\label{scheq8}
\text{ for any $Y\in [T_S]$, $C_Y= \bigcap\limits_{(\rho,l)\in S, e\in C(\rho,l),\rho\preceq Y}\mcal{U}_e$
is large.}
\end{align}

A condition $\h d = (\tau^{ i}_{ s}, \h S, \h C  :   i<\ch k,s<\h r)$
\emph{extends} a condition $d = (\sigma^i_s, S, C:  i<\ch k,s<r)$
(written as $\h d\subseteq d$) if there exists a function $g: \h r\rightarrow r$,
a  $\t\rho$ with $[\t\rho]\cap [T_S]\ne\emptyset$  such that:
\begin{enumerate}
\item For every $i<\ch k, s<\h r$, $\tau^{i}_{ s}\succeq
\sigma^{i}_{g( s)}$;

\item For every $i<\ch k,s<\h r$,
$\tau^{i}_{s}\setminus \sigma^{i}_{g(s)}
\subseteq \t\rho^{-1}(g(s))$;

\item For every $(\h \rho,l)\in \h S$,
$\h\rho$ is a refinement of some element in
$T_S$. More precisely,
let $\rho$ be such that
  $ |\h{\rho}| = |\rho|$ and
 $\cup_{\h s:g(\h s) = s}\h{\rho}^{-1}(\h s)= \rho^{-1}(s)$ for all $s<r$,
 then $\rho\succeq\t\rho$, $(\rho,l)\in S$
 and $\h C(\h\rho,l)\supseteq C(\rho,l)$;

\end{enumerate}
In which case we say branch $\h s$ is a child branch of $g(\h s)$.
Intuitively, a condition $\h d$ extends $d$
means the collection $[\h d^i_{\h s}]$
 a sub collection of $[d^i_s]$
 if $\h s$ is a child branch of $s$.
It is easy but tedious to check that the extension relation is
transitive.

Given a formula $\Phi = \forall n\exists m\psi(G,n,m)$ where $\psi$
is $\Delta_0^0$, we let
$$\mcal{U}_{<\sigma,\Phi,n>} = \big\{
X: (\exists \tau\subseteq X\setminus |\sigma|)(\exists m)\
[\psi(\sigma\cup \tau,n,m)]\big\}.$$

\begin{definition}[Forcing]\label{schdef0}
Given a formula $\Phi$,
we define condition $d = (\sigma^i_s, S, C:  i<\ch k,s<r)$
\emph{forces } $\Phi$ on part $(s,i)$ (written as $d\vdash_{s,i} \Phi$)
as following: for some $\Delta_0^0$ formula $\psi$

\begin{enumerate}
\item When $\Phi = \psi(G)$,
then $d\vdash_{s,i}\Phi$
iff  $\psi(\sigma^i_s)$;

\item  When
$\Phi = \exists n \psi(G,n)$,
then $d\vdash_{s,i} \Phi$ iff $d\vdash_{s,i}\psi(G,n)$
for some $n$;

\item When $\Phi = \forall n \neg\psi(G,n)$,
then $d\vdash_{s,i} \Phi$ iff for every $n$, every
$\tau\in [d^i_s]$,
$\neg\psi(\tau,n)$;

\item When $\Phi = \exists n\forall m \neg\psi(G,n,m)$,
then $d\vdash_{s,i} \Phi$ iff for some $n$,
$d\vdash_{s,i} \forall m\neg\psi(G,n,m)$;

\item When $\Phi = \forall n\exists m \psi(G,n,m)$,
then $d\vdash_{s,i} \Phi$ iff
for every $\tau\in [d_s^i]$,
every $\rho\in T_S$, every $n$, if $  [\rho]\cap [T_S]\ne\emptyset$
and $\tau\setminus \sigma^i_s\subseteq \rho^{-1}(s)$, then
there exists an $l\geq n$ such that $(\rho,l)\in S$ and
$<\tau,\Phi,n>\in C(\rho,l)$.
\end{enumerate}

\end{definition}
Let $\h d=
(\tau^{i}_{s}, \h S, \h C: i<\ch k,s<\h r)\subseteq d=
 (\sigma^i_s, S, C: i<\ch k,s<r)$
 witnessed by $g,\t\rho$.
 Let $\Phi$ be a formula generated by
 a $\Delta_0^0$ formula $\psi$ as in one of the five
 items in definition \ref{schdef0}; moreover,
 $\psi$ satisfies
\begin{align}\nonumber
\text{ for every $n,m$, every $\tau'\succeq\tau$,
$\psi(\tau,n,m)\rightarrow \psi(\tau',n,m)$.
}
\end{align}
\begin{lemma}[Extension]
If $d\vdash_{s,i} \Phi$, then
for every child branch $ \h s $ of $s$,
$\h d\vdash_{\h s,i}\Phi$.

\end{lemma}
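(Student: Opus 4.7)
The plan is to induct on the structure of $\Phi$ across the five cases of Definition \ref{schdef0}. The backbone of the argument is the pointwise inclusion $[\h d^i_{\h s}]\subseteq[d^i_s]$ whenever $g(\h s)=s$. To see this, take $\h G\in[\h d^i_{\h s}]$ witnessed by some $\h Y\in[T_{\h S}]$. Extension conditions (1)--(2) give $\tau^i_{\h s}\succeq\sigma^i_s$ and $\tau^i_{\h s}\setminus\sigma^i_s\subseteq\t\rho^{-1}(s)$, while condition (3), applied to successive initial segments of $\h Y$ in $T_{\h S}$, yields a coherent sequence of refinements limiting to some $Y\in[T_S]$ with $\t\rho\preceq Y$. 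Since $g(\h s)=s$, every position $x$ with $\h Y(x)=\h s$ satisfies $Y(x)=s$; combined with $\t\rho^{-1}(s)\subseteq Y^{-1}(s)$, this yields $\h G\subseteq\sigma^i_s\cup Y^{-1}(s)$, so $\h G\in[d^i_s]$.

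Given this inclusion, cases (1)--(4) fall out quickly: case (1) applies the stated monotonicity of $\psi$ to $\tau^i_{\h s}\succeq\sigma^i_s$; case (3) is immediate from the inclusion; cases (2) and (4) reduce to (1) and (3) via the induction hypothesis on the inner subformula.

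The substantive case is (5). Fix $\h\tau\in[\h d^i_{\h s}]$, $\h\rho\in T_{\h S}$ with $[\h\rho]\cap[T_{\h S}]\neq\emptyset$ and $\h\tau\setminus\tau^i_{\h s}\subseteq\h\rho^{-1}(\h s)$, and a number $n$. Let $\rho$ be the $T_S$-counterpart of $\h\rho$ supplied by extension condition (3); the first paragraph shows $\h\tau\in[d^i_s]$, $[\rho]\cap[T_S]\neq\emptyset$, and $\h\tau\setminus\sigma^i_s\subseteq\rho^{-1}(s)$. Applying $d\vdash_{s,i}\Phi$ to $\h\tau,\rho,n$ produces $l''\geq n$ with $(\rho,l'')\in S$ and $\langle\h\tau,\Phi,n\rangle\in C(\rho,l'')$. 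The main obstacle is that this $l''$ need not satisfy $(\h\rho,l'')\in\h S$, so we cannot directly invoke $\h C(\h\rho,l'')$. To bridge the gap, use $[\h\rho]\cap[T_{\h S}]\neq\emptyset$ to select an extension $\h\rho'\succeq\h\rho$ in $T_{\h S}$ long enough that any witness $l$ with $(\h\rho',l)\in\h S$ has $l\geq l''$; the hereditary clause in the definition of a condition then forces $(\h\rho,l)\in\h S$. Extension condition (3) delivers $(\rho,l)\in S$ and $C(\rho,l)\subseteq\h C(\h\rho,l)$, and clause (4) of the condition definition supplies $C(\rho,l'')\subseteq C(\rho,l)$. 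Chaining,
\[
\langle\h\tau,\Phi,n\rangle\in C(\rho,l'')\subseteq C(\rho,l)\subseteq\h C(\h\rho,l),
\]
as required.
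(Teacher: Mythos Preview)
Your argument is correct and follows essentially the same route as the paper's proof. Both handle cases (1)--(4) via the inclusion $[\h d^i_{\h s}]\subseteq[d^i_s]$ and the monotonicity hypothesis on $\psi$, and in case (5) both pass from $\h\rho$ to its $T_S$-counterpart $\rho$, invoke $d\vdash_{s,i}\Phi$ to obtain a witness $l''$ (the paper calls it $l$), then find a larger index $l$ with $(\h\rho,l)\in\h S$ and chain $C(\rho,l'')\subseteq C(\rho,l)\subseteq\h C(\h\rho,l)$ via condition clause (4) and extension clause (3). The only cosmetic difference is that the paper asserts directly that infinitely many $\h l$ satisfy $(\h\rho,\h l)\in\h S$, whereas you unroll that fact by extending $\h\rho$ to a long $\h\rho'$ and invoking the hereditary clause of condition item (2); these are the same observation.
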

\begin{proof}

If $\Phi=\psi(G)$,
simply note that $\tau^i_{\h s}\succeq \sigma^i_s$,
therefore $\psi(\tau^i_{\h s})$ is true since $\psi(\sigma^i_s)$
is true.
If $\forall n\neg\psi(G,n)$,
since $[\h d^i_{\h s}]\subseteq [d^i_s]$,
therefore the conclusion follows.
The proof for $\Phi$ of form $\exists n\psi(G,n) ,\exists n\forall m\psi(G,n,m)$
follows similarly.

Suppose $\Phi= \forall n\exists m\psi(G,n,m)$.
Let $\tau\succeq \tau^{i}_{\h s}$ and $\h\rho$
satisfy $\tau\setminus \tau^{i}_{\h s}
\subseteq \h\rho^{-1}(\h s)$ 
 with $[\h\rho]\cap [T_{\h S}]\ne\emptyset$,
 let $n\in\omega$, we need to show that
there exists an $\h l\geq n$ such that $(\h\rho,\h l)\in \h S$
and $<\tau,\Phi,n>\in \h C(\h\rho,\h l)$.
Let $\h Y\in [\h \rho]\cap [T_{\h S}], Y\in [T_{S}]$
be such that $\h Y$ is refinement of $Y$ witnessed by $g$,
i.e., $\cup_{\t s:g(\t s)= s'}\h Y^{-1}(\t s) = Y^{-1}(s')$ for all $s'<r$.
Let $\rho = Y\uhr |\h\rho|$. Clearly
$\tau\setminus \sigma^i_s\subseteq \rho^{-1}(s)$. 
Since $d\vdash_{i,s}\Phi$,
by item (5) of forcing, there exists an
$l\geq n$ such that  $(\rho,l)\in S$
and $<\tau,\Phi,n>\in C(\rho,l)$.
Since $\h\rho\in T_{\h S}$ satisfy $[\h\rho]\cap [T_{\h S}]\ne\emptyset$,
there exists infinitely many $\h l$ such that $(\h\rho,\h l)\in \h S$
(see item (3) of the definition of condition).
Suppose $(\h \rho, \h l)\in \h S$ satisfy $\h l\geq l$.
But by item (3) of extension (and definition of
condition item (4)), $(\rho,\h l)\in S $ and
$\h C(\h\rho,\h l)\supseteq C(\rho,\h l)\supseteq C(\rho,l)\ni
<\tau,\Phi,n>$.
Thus we are done.

\end{proof}
\def\heteriditarillyvalid{heteriditarilly valid}

Due to the indirect nature of forcing item (5),
$d\vdash_{i,s}\Phi$ does necessarily implies
that $\Phi(G)$ is true for all $G\in [d^i_s]$.
Therefore we incorporate the notion of valid, which
roughly means, if branch $s$ is valid,
then whatever is forced is true provided $G$
is in that branch and constructed through a sequence
 of sufficiently generic conditions.
Because of the effectiveness of $S$, our definition of
validity is necessarily more tricky than that in \cite{monin2019pigeons}.

Let $d=
 (\sigma^i_s, S, C: i<\ch k,s<r)$ be a condition.
\begin{definition}[Valid]
\label{schdef3}
Given $s<r,\rho\in T_S$ with $[\rho]\cap [T_S]$.
\begin{itemize}
\item  We say
branch $s$ of $d$
 is \emph{valid over $\rho$ for $d$} if
 there exists a $Y\in[T_S]\cap [\rho]$
 such that for every $l$ with $(\rho,l)\in S$,
$Y^{-1}(s)\in \bigcap_{e\in C(\rho,l)}\mcal{U}_e$.
When $d$ is clear, we simply say branch $s$ is valid over $\rho$.
\item
We say branch $s$ of $d$ is \emph{valid in $d$} if it is valid
over every $\rho\in T_S$ such that $[\rho]\cap [T_S]\ne\emptyset$.
\item Condition $d$ is \emph{\heteriditarillyvalid} if
for every $s<r$, either it is valid or it is not valid
over any $\rho\in T_S$ with $[\rho]\cap [T_S]\ne\emptyset$.
\end{itemize}
\end{definition}

The next lemma says that roughly speaking,
for each condition $d$,
we can extends $d$ so that a valid branch   exist.

\begin{lemma}\label{schlem5}
Fix a condition $ d$
 and a branch $s$ of $d$.
\begin{enumerate}
\item
Condition $d$ admit a \heteriditarillyvalid\  extension.
\item
For  any $\rho$ with $[\rho]\cap [T_S]\ne\emptyset$,
 there exists an $s^*<r$ such that branch $s^*$ is valid over $\rho$.

 \item If $d$ is \heteriditarillyvalid, then there exists a branch $s^*$ such that
 $s^*$ is valid in $d$.

 \item If $\h d\subseteq d$ and  branch $s$ of $d$ is not valid over any $\rho$ with $[\rho]\cap [T_S]\ne\emptyset$,
 then for any child branch $\h s$ of $s$, $\h s$ is not valid
 over any $\h\rho$ with $[\h\rho]\cap [T_{\h S}]\ne\emptyset$.

\end{enumerate}
\end{lemma}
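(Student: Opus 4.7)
The plan is to prove the four items in the order (2), (4), (3), (1), since (3) invokes (2) and (1) invokes (4). The whole argument rests on two observations: (K1) for every $\rho\in T_S$ with $[\rho]\cap[T_S]\ne\emptyset$ the class $U(\rho):=\bigcap_{l:(\rho,l)\in S}\bigcap_{e\in C(\rho,l)}\mcal{U}_e$ is large, and (K2) non-validity is upward closed along $T_S$. For (K1), if $U(\rho)$ were not large then Lemma \ref{schlem4} would supply a finite $\tilde C\subseteq\bigcup_l C(\rho,l)$ with $\bigcap_{e\in\tilde C}\mcal{U}_e$ not large; but $\tilde C\subseteq C(\rho,l^*)$ for arbitrarily large $l^*$ by monotonicity (item (4) of condition), so $\bigcap_{e\in\tilde C}\mcal{U}_e\supseteq\bigcap_{e\in C(\rho,l^*)}\mcal{U}_e$ inherits $l^*$-largeness from item (3), which for $l^*$ big contradicts the specific $l$ at which largeness failed. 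For (K2), monotonicity gives $U(\rho')\subseteq U(\rho)$ for $\rho\preceq\rho'$, so any $Y\in[T_S]\cap[\rho']$ witnessing validity of $s$ over $\rho'$ also witnesses validity over $\rho$ via the same $Y^{-1}(s)$.

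Items (2) and (3) then follow quickly. For (2), since $U(\rho)$ is in particular $r$-large by (K1), any $Y\in[T_S]\cap[\rho]$ (existing by hypothesis) viewed as an $r$-partition of $\omega$ has some color $s^*$ with $Y^{-1}(s^*)\in U(\rho)$, so $s^*$ is valid over $\rho$. For (3), pick any root $\rho\in B$ with $[\rho]\cap[T_S]\ne\emptyset$---one exists because $T_S$ is infinite over finite $B$---apply (2) to get $s^*$ valid over $\rho$, and conclude from hereditary validity that $s^*$ is valid in $d$ (the only other option, being nowhere valid, is ruled out).

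For (4), I argue by contrapositive. Suppose $\hat Y\in[T_{\hat S}]\cap[\hat\rho]$ witnesses validity of the child branch $\hat s$ of $s$ over $\hat\rho$ for $\hat d$. Let $Y\in[T_S]$ be the element refined by $\hat Y$ via $g$; then $\hat Y^{-1}(\hat s)\subseteq Y^{-1}(s)$. Set $\rho=Y\uhr|\hat\rho|$; the extension relation forces $\rho\succeq\tilde\rho$, and $Y\in[T_S]\cap[\rho]$. For each $l$ with $(\rho,l)\in S$, pick $\hat l\geq l$ with $(\hat\rho,\hat l)\in\hat S$ (infinitely many exist, as used in the preceding Extension lemma). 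By the extension relation and monotonicity, $\hat C(\hat\rho,\hat l)\supseteq C(\rho,\hat l)\supseteq C(\rho,l)$, so $\hat Y^{-1}(\hat s)\in\bigcap_{e\in\hat C(\hat\rho,\hat l)}\mcal{U}_e\subseteq\bigcap_{e\in C(\rho,l)}\mcal{U}_e$, and upward closedness of $\mcal{U}_e$ lifts this to $Y^{-1}(s)\in\bigcap_{e\in C(\rho,l)}\mcal{U}_e$. Hence $s$ is valid over $\rho$ for $d$, contradicting the hypothesis.

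For (1), I iterate minimal extensions. Process branches $s=0,1,\dots,r-1$ in turn; at step $s$, if branch $s$ is variable in the current condition, pick $\rho^*\in T_S$ with $[\rho^*]\cap[T_S]\ne\emptyset$ over which branch $s$ is not valid, and form the minimal extension with $g=\mathrm{id}$ and $\tilde\rho=\rho^*$, leaving the $\sigma^i_s$ and $C$ unchanged but restricting $T_S$ to its subtree above $\rho^*$. By (K2), every $\hat\rho\succeq\rho^*$ with $[\hat\rho]\cap[T_{\hat S}]\ne\emptyset$ still has $s$ not valid over it for the old $d$, and a direct transfer using $[T_{\hat S}]\subseteq[T_S]$ and $\hat C=C$ turns this into nowhere validity in the new condition. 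Valid branches remain valid because restriction only shrinks the set of $\rho$'s to be checked, and nowhere-valid branches remain nowhere valid by item (4). After at most $r$ rounds the condition is hereditarily valid. The main obstacle is the careful bookkeeping in item (4), where one must simultaneously track the branch refinement $g$ and the monotonicity in $l$ of $\hat C\supseteq C$; once (4) and the observation (K2) are secured, (1) reduces to counting.
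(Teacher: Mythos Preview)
Your proof is correct and follows essentially the same strategy as the paper. Your observations (K1) and (K2) correspond precisely to the paper's equation (\ref{scheq8}) and the ``simple observation'' opening the paper's proof of item~(1); your treatments of (2) and (3) match the paper's almost verbatim. Two minor differences: for (4) the paper simply says ``direct'' while you supply the full chase through $g$, $\tilde\rho$, and the monotonicity $\hat C(\hat\rho,\hat l)\supseteq C(\rho,\hat l)\supseteq C(\rho,l)$---exactly the intended unpacking; for (1) the paper finds a \emph{single} $\rho^*$ stabilizing all branches at once (since there are only finitely many branches and non-validity propagates upward, one can descend finitely often to a node below which every branch's status is decided), whereas you achieve the same effect by $r$ successive minimal extensions, one per branch. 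Both arguments rest on (K2) and item~(4); yours is just the unrolled form of the paper's compressed statement.
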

\begin{proof}
Proof of (1).
A simple observation is that if
$s$ is not valid over $\rho$ with $[\rho]\cap [T_S]\ne\emptyset$,
then $s$ is not valid over any $\rho'\in  T_S\cap [\rho]^\preceq$ with $[\rho']\cap [T_S]\ne\emptyset$.
Therefore there exists a $\rho^*\in T_S$ with $[\rho^*]\cap [T_S]\ne\emptyset$
such that for every $s<r$, either branch $s$ is not valid
over $\rho^*$, or $s$ is valid over every
$\rho'\in  T_S\cap [\rho^*]^\preceq$ with $[\rho']\cap [T_S]\ne\emptyset$.
Define $\h d$ as following,
let $\h S = \{(\rho,l)\in S:\rho\succeq \rho^*\}$
and the other component of $\h d$ is the same as $d$.
Clearly $\h d\subseteq d$ is \heteriditarillyvalid.

Proof of (2).
Fix $Y\in [\rho]\cap [T_S]$. As we have observed in (\ref{scheq8}),
$\bigcap_{e\in C_Y}\mcal{U}_e$ is large.
Therefore,    there must exist $s^*<r$
such that $Y^{-1}(s^*)\in \bigcap_{e\in C_Y}\mcal{U}_e$.
But $ \bigcap_{e\in C_Y}\mcal{U}_e
\subseteq \bigcap_{e\in C(\rho,l)}\mcal{U}_e$ for all $l$ with
$(\rho,l)\in S$. Thus we are done.

Item (3) is  direct  from item (2). Item (4) is also direct.
\end{proof}

\begin{definition}[Forcing question]
\label{schdef2}
Let $\Phi_i = \forall n\exists m\psi_i(G,n,m)$.
We say $d=
 (\sigma^i_s, S, C: i<\ch k,s<r)$
  \emph{potentially forces $\vee_{i< \ch k}\neg\Phi_i$
on branch $s$}
if there exists a
$Y\in [T_S]$, some set
$\mu^{i,0}_{s},
\cdots,\mu^{i,j_i}_s\subseteq (\ch f^{-1}(i)\cap Y^{-1}(s))\setminus |\sigma^i_s|$
for each $i<\ch k$ such that the class
$$\bigcap\limits_{
j\leq j_i,n\in\omega,i<\ch k}\mcal{U}_{<\sigma^{i}_s\cup \mu^{i,j}_s,\Phi_i,n>}\bigcap\limits_{e\in C_Y}\mcal{U}_e$$
is not large.
\end{definition}

The forcing question \ref{schdef2} is not quite the same
as \cite{monin2019pigeons}.
Especially that item (1) of \ref{schdef2} can be very complex to
decide since the given $\ch k$-partition $f$ can be arbitrary complex.
One of the applications in \cite{monin2019pigeons},
is to force  $\Psi^{G'}(n)\ne D(n)$
for some $n$ where $D$ is a given degree not computable in $\emptyset'$.
Therefore,
for each $n$, one need to enumerate the value of $\Psi^{G'}(n)$
by checking, for each $n$, the answer of the corresponding forcing question
(just like in the cone avoidance for $\Pi_1^0$ class,
where for each $n$, one need to enumerate the possible value of
$\Psi^G(n)$).
Thus they need the forcing question.
Here
 we do not need the effectiveness
of the forcing question.
One may wonder what about the effectiveness of Schnorr test.
When forcing $\Psi^G$ to be succeed by some Schnorr test,
we take advantage of $d_{s,i}$ forces “$\Psi^G$ is total",
so that the for each finite set of forcing question of particular form,
one of them admit a negative answer (i.e., the $\mu$ strings do not exist).
Thus overcome the effectiveness issue.

The key lemma is the following, which says that given a tuple of $\Pi_2^0$ formulas,
we can either force positive or negative of these formulas on a given branch.

\begin{lemma}\label{schlemforcing}
Fix an $s<r$, $k$ many formulas
 $\Phi_i = \forall n\exists m\psi_i(G,n,m)$.
\begin{enumerate}
\item If $d$ potentially force $\vee_i\neg\Phi_i$
on branch $s$, then there exists an extension $\h d$ of $d$
such that for every valid child branch $\h s$ of $s$, there exists $\h i<\ch k$
such that $\h d\vdash_{\h s,\h i}\neg\Phi_{\h i}$.

\item If $d$ does not potentially force $\vee_i\neg\Phi_i$
on branch $s$, then there exists an extension $\h d $ of $d$
such that for every child  branch $\h s$ of $s$ there exists an $\h i<\ch k$,
such that $d\vdash_{\h s,\h i}\Phi_{\h i}$.

\end{enumerate}

\end{lemma}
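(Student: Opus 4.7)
The plan is to handle Part (1) and Part (2) separately. Both rely on the standard move in this forcing framework: convert statements about the $\Pi^0_2$ formulas $\Phi_i$ into statements about upward-closed $\Sigma^0_1$ classes of the form $\mcal{U}_{<\sigma^i_s \cup \mu^{i,j}_s, \Phi_i, n>}$, then use the yes/no answer of the forcing question to manipulate either the $\mcal{U}$-indices recorded in $\h C$ (Part (2)) or the branch/tree structure of $\h S$ (Part (1)).

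For Part (2), the negative answer says that for every $Y \in [T_S]$ and every finite family of $\mu^{i,j}_s$, $\mcal{V}$ is large. The plan is to keep the branch structure unchanged ($\h r = r$, $g = \mathrm{id}$), fix $\t\rho \in T_S$ with $[\t\rho] \cap [T_S] \neq \emptyset$, restrict $\h S = \{(\rho,l) \in S : \rho \succeq \t\rho\}$, pick a single $\h i < \ch k$, and enlarge $\h C$ via
$$\h C(\rho,l) = C(\rho,l) \cup \{<\tau,\Phi_{\h i},n>\ :\ \tau \succeq \sigma^{\h i}_s,\ \tau\setminus\sigma^{\h i}_s \subseteq \rho^{-1}(s),\ |\tau| \leq l,\ n \leq l\}.$$
For item (5) of Definition \ref{schdef0}, given $\tau \in [\h d^{\h i}_{\h s}]$, $\h\rho$, and $n$, the required index lies in $\h C(\h\rho,\h l)$ at $\h l = \max(|\tau|,n)$. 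Largeness of $\bigcap_{e \in \h C(\rho,l)} \mcal{U}_e$ is checked by taking any $Y \in [T_S] \cap [\rho]$ and applying the negative forcing question to the $\mu$-family given by the finitely many admissible tails $\tau \setminus \sigma^{\h i}_s$.

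For Part (1), I fix witnesses $Y$ and $\mu^{i,j}_s$ of the positive forcing question, apply Lemma \ref{schlem4} to obtain $\rho^* \preceq Y$ in $T_S$ and a finite index set $F$ so that the finite sub-intersection $\mcal{V}'$ fails to be $l_0$-large for some $l_0$, and take a partition $p : \omega \to l_0$ witnessing non-largeness; for each $s' < l_0$ fix $\alpha(s') \in F$ with $p^{-1}(s') \notin \mcal{U}_{\alpha(s')}$. The plan is to build $\h d$ with $l_0$ child branches $\h s_{s'}$ of $s$: on the child $\h s_{s'}$ where $\alpha(s') = <\sigma^i_s \cup \mu^{i,j}_s, \Phi_i, n>$, commit $\tau^i_{\h s_{s'}} = \sigma^i_s \cup \mu^{i,j}_s$ (keeping $\tau^{i'}_{\h s_{s'}} = \sigma^{i'}_s$ for $i' \neq i$), and refine $T_{\h S}$ so that $\h\rho^{-1}(\h s_{s'}) = \rho^{-1}(s) \cap p^{-1}(s')$ for any $\h\rho$ refining $\rho \in T_S$ with $\rho \succeq \rho^*$. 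Any valid child $\h s_{s'}$ must then have $\alpha(s')$ of $\Phi$-form: if instead $\alpha(s') \in C(\rho,l)$, then $\h Y^{-1}(\h s_{s'}) \subseteq p^{-1}(s') \notin \mcal{U}_{\alpha(s')}$ would contradict validity. The inclusion $\h Y^{-1}(\h s_{s'}) \subseteq p^{-1}(s')$ together with $p^{-1}(s') \notin \mcal{U}_{<\sigma^i_s \cup \mu^{i,j}_s, \Phi_i, n>}$ then forces $\h d \vdash_{\h s_{s'}, i} \neg \Phi_i$ via items (3)--(4) of Definition \ref{schdef0}.

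The main obstacle will be the effectiveness of $\h S$: a priori the witnessing partition $p$ need not be c.e., but $\h S$ must be. The plan is to exploit that $\mcal{V}'$ is a finite intersection of upward-closed $\Sigma^0_1$ classes, so that failure of $l_0$-largeness is itself witnessed by a compactness-style c.e.\ search over finite partial $l_0$-colorings whose color classes have not yet enumerated any finite witness of any $\mcal{U}_e$, $e \in F$; one commits to such a partial coloring stage by stage inside $\h S$, keeping $\h S$ c.e.\ Once this effectiveness is handled, verifying the four clauses of the condition definition for $\h d$, the three clauses of the extension relation (with $g(\h s_{s'}) = s$ and $\t\rho = \rho^*$), and the largeness of $\h C$ reduces to routine bookkeeping.
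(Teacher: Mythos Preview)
Your Part~(2) has a real gap. You keep the branch structure unchanged and pick a single $\hat\imath<\check k$, then enlarge $\hat C(\rho,l)$ by all indices $\langle\tau,\Phi_{\hat\imath},n\rangle$ with $\tau\setminus\sigma^{\hat\imath}_s\subseteq\rho^{-1}(s)$. But the negative answer to the forcing question (Definition~\ref{schdef2}) only guarantees largeness for intersections built from $\mu^{i,j}_s\subseteq\bigl(\check f^{-1}(i)\cap Y^{-1}(s)\bigr)\setminus|\sigma^i_s|$: the extensions must respect the external partition $\check f$. Your admissible tails $\tau\setminus\sigma^{\hat\imath}_s$ need not lie in $\check f^{-1}(\hat\imath)$, so you cannot feed them back into the forcing question to certify $l$-largeness of $\bigcap_{e\in\hat C(\rho,l)}\mathcal U_e$. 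And you cannot simply restrict $\hat C$ to those $\tau$ that do lie in $\check f^{-1}(\hat\imath)$: first, $\check f$ is not computable, so $\hat C$ would not be; second, item~(5) of Definition~\ref{schdef0} demands the index $\langle\tau,\Phi_{\hat\imath},n\rangle$ for \emph{every} $\tau\in[\hat d^{\hat\imath}_s]$, not just those compatible with $\check f$. The paper's fix is not cosmetic: it splits branch $s$ into $\check k$ sub-branches $(s,i)$ and, for each $(\rho,l)\in S$, \emph{searches} for some $\rho'\in k^{|\rho|}$ (a finite guess at $\check f$) making the enlarged index set $l$-large; the new reservoir for branch $(s,\hat\imath)$ is $\rho'^{-1}(\hat\imath)\cap\rho^{-1}(s)$, so item~(5) only asks for indices of $\tau$'s whose tails lie there, and taking $\rho'=\check f\upharpoonright|\rho|$ witnesses that the search halts. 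The splitting is what decouples the forcing requirement from knowledge of $\check f$.

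Your Part~(1) is broadly on the right track and matches the paper's strategy, but the effectiveness paragraph is too vague to stand as written. The class of $l_0$-partitions witnessing non-$l_0$-largeness is $\Pi^0_1$, so its tree is co-c.e., not c.e.; ``committing stage by stage'' to a path is a $\emptyset'$-process, not a c.e.\ one. What the paper actually does is invoke the low basis theorem to pick a single $\Delta^0_2$ partition $X$ in that $\Pi^0_1$ class, take a c.e.\ tree $T$ with $[T]=\{X\}$, and build $\hat S$ from pairs $(\rho,\rho')$ with $\rho'\in T$. The initial segments $\tau^i_{(s,n)}$ are then determined once and for all by $X$ (non-effectively, but that is fine since only $\hat S$ and $\hat C$ must be c.e.). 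You should make this explicit rather than gesturing at a compactness search.
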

\begin{proof}
This is Lemma 3.10 of \cite{monin2019pigeons}.

Proof of (1).
Let $Y,\mu^{i,j}_s,j\leq j_i,i<\ch k$ be as in definition \ref{schdef2}.
By Lemma \ref{schlem4}, there must exist
a finite set $C\subseteq C_Y$, and an $n^*$
such that $$\bigcap\limits_{
j\leq j_i,i<\ch k,n<n^*}\mcal{U}_{<\sigma^{i}_s\cup \mu^{i,j}_s,\Phi_i,n>}\bigcap\limits_{e\in C}\mcal{U}_e$$
is not $n^*$-large.
Let $Q$ be the $\Pi_1^0$ class of $n^*$-partition of $\omega$
witnessing it to be not $n^*$-large and let
$X\in Q$ be $\Delta_2^0$.

We now split branch $s$ into $n^*$ many branches
and refine the mathias tail, namely members in $[T_S]$ by refining
them with $X$
as following.
 For each $n<n^*$,
 \begin{itemize}
 \item if
$X^{-1}(n)\notin \bigcap_{e\in C}\mcal{U}_e$,
then let $\tau^{i}_{(s,n)} = \sigma^i_s$ for all $i<\ch k$;

\item if $X^{-1}(n)\in \bigcap_{e\in C}\mcal{U}_e$,
then there must exist $i_n<\ch k, j\leq j_i$
such that $X^{-1}(n)\notin \bigcap_{n'<n^*}\mcal{U}_{<\sigma^{i_n}_s\cup \mu^{i_n,j}_s,\Phi_{i_n},n'>}$,
in which case let
$\tau^{i}_{(s,n)} = \sigma^i_s$ if $i\ne i_n$ and
$\tau^i_{(s,n)} = \sigma^i_s\cup \mu^{i_n,j}_s$ if $i=i_n$.
\end{itemize}
Note that since $\mu^{i,j}_s\cap |\sigma^i_s|=\emptyset$,
therefore $\tau^i_{(s,n)}\succeq \sigma^i_s$.
Now we define the other
component of the extension.
Since $X$ is $\Delta_2^0$, there exists
a c.e. tree $T$ (closed downward)  such that
$[T] = \{X\}$.
For $\rho\in T_S,\rho'\in T$ with $|\rho'|\geq \rho$,
let $(\rho,\rho')_s $ be such a string
 of length $|\rho|$ that  refines $\rho$ on part $s$,
 i.e.,
for every $m\leq |\rho|$
\begin{align}
(\rho,\rho')_s(m)  =
\left\{
\begin{aligned}
&\rho(m)\text{ if }\rho(m)\ne s;\\ \nonumber
&(s,n)\text{ if }\rho(m) = s\wedge \rho'(m) = n.
\end{aligned}
\right.
\end{align}
Let   $(\rho^*,l^*)\in S$ be such that
$\rho^*\prec Y$, $C(\rho^*,l^*)\supseteq C$
and $[\rho^*]\cap [T_S]\ne\emptyset$ (which must exist
since some initial segment of $Y$ can be $\rho^*$).
Let
\begin{align}\nonumber
&\h S = \big\{
((\rho,\rho')_s,l): (\rho,l)\in S, \rho'\in T, |\rho'| = |\rho|,
 \rho\succeq \rho^*, l\geq l^*
\big\} \text{ and }\\ \nonumber
&\h C(((\rho,\rho')_s,l)) = C(\rho,l)
\text{ for all }((\rho,\rho')_s,l)\in \h S
\end{align}

It's obvious that $\h d = (\tau^i_{s},\h S,\h C: i<\ch k, s<\h r)$
is a condition (especially checking item (3) the downward closeness of $T_S$)
extending $d$
witnessed by $\rho^*$
(especially checking item (3)). By Lemma \ref{schlem5}, we assume that $\h d$ is \heteriditarillyvalid.
Suppose branch $(s,n)$ is valid in $\h d$.

We show that for some $n'$ (depending on $n$),
 $\h d\vdash_{(s,n),i_n}\forall m\neg\psi_{i_n}(G,n',m)$.
Let $Z\in [T_{\h S}]$ and let $\tau\succeq \tau^{i_n}_{(s,n)}$ satisfy
$\tau\setminus \tau^{i_n}_{(s,n)}\subseteq Z^{-1}((s,n))\subseteq X^{-1}(n)$
(recall definition of $\h S$).
Since $\rho^*\prec Y$, therefore $$\h C_{Z}\supseteq C(\rho^*,l^*)\supseteq C.$$
Since $(s,n)$ is valid in $\h d$, there exists a $ \h Z\in [\rho^*]\cap [T_{\h S}]$
such that,
 $$\h Z^{-1}((s,n))\in \bigcap\limits_{e\in C(\rho^*,l^*)}\mcal{U}_e\subseteq
\bigcap\limits_{e\in C}\mcal{U}_e.$$
Therefore $X^{-1}(n)\in \bigcap_{e\in C}\mcal{U}_e$
since $\mcal{U}_e$ is closed upward and $\h Z^{-1}((s,n))\subseteq X^{-1}(n)$.
Since $X\in Q$  and by how we split branch $s$ (the second item),  for some $n'<n^*$,
$$X^{-1}(n)\notin
\mcal{U}_{<\tau^{i_n}_{(s,n)},\Phi_{i_n},n'>}.$$
Thus  by definition of $\mcal{U}_{<\tau^{i_n}_{(s,n)},\Phi_{i_n},n'>}$, we have
$\neg\psi_{i_n}(\tau,n',m)$ holds. Thus we are done.

\ \\

Proof of (2).
The branch $s$ is split into $k$ branches, namely
$(s,i),i<\ch k$.
We now define $\h S$ as following:
for each $(\rho,l)\in S$, wait for such a time
$t$ that for some $\rho'\in k^{<\omega}$ with $|\rho'| = |\rho|$,
 by the time $t$, it is found that the class
$$
\bigcap\limits_{e\in C(\rho,l)}\mcal{U}_e\cap \big(
\bigcap
\big\{\mcal{U}_{<\tau,\Phi_i,n>}:
i<\ch k, \tau\succeq \sigma^i_s,\tau\setminus\sigma^i_s\subseteq  \rho'^{-1}(i)\cap
\rho^{-1}(s), n\leq l\big\}\ \big)
$$
is $l$-large.
If such $t,\rho'$ exists for $(\rho,l)$,
then enumerate $((\rho,\rho')_s,l)$ into
$\h S$ and let
$$\h C(((\rho,\rho')_s,l)) = C(\rho,l)\cup
\big\{<\tau,\Phi_i,n>: i<\ch k, \tau\succeq \sigma^i_s,\tau\setminus\sigma^i_s\subseteq \rho'^{-1}(i)\cap
\rho^{-1}(s), n\leq l\big\}.$$
If such $t,\rho'$ does not exists for $(\rho,l)$, then do nothing.

We now verify the extension relation.
By definition of $\h S,\h C(\rho,l)$, it's easy to check that
item (3)   of extension is satisfied.
Moreover, by our hypothesis of this Lemma, $T_{\h S}$ must be infinite since
for every $(\rho,l)\in S$ with $[\rho]\cap [T_S]\ne\emptyset$,
such $\rho',t$ exists since an initial  segment of $\ch f$
could play the role of $\rho'$.
For every $i,\h i<\ch k, s<r$,
let $\tau^i_{(s,\h i)} = \sigma^i_{s}$.
The condition $\h d$ is the condition where initial segments of branch $s$
of $d$ is extended to $\tau^i_{(s,\h i)}$ and $S,C(\rho,l)$
are replaced by $\h S,\h C((\rho,\rho')_s,l)$ respectively.
It's trivial to verify other items of the definition
of extension.

We now verify forcing.
Fix a child branch $(s,\h i)$ of $s$,   we show that
$\h d\vdash_{(s,\h i),\h i}\Phi_{\h i}$.
Fix a $(\rho,\rho')_s$ with $[(\rho,\rho')_s]\cap[T_{\h S}]\ne\emptyset$,
a $\tau\succeq \tau^{\h i}_{(s,\h i)}$
with $\tau\setminus \tau^{\h i}_{(s,\h i)}\subseteq
\rho'^{-1}(\h i)\cap \rho^{-1}(s)$
and a $n\in\omega$, we need to show that for some $l\geq n$,
$((\rho,\rho')_s,l)\in \h S$ and
$<\tau,\Phi_{\h i},n>\in \h C((\rho,\rho')_s,l)$.
Note that $$\tau\succeq\sigma^{\h i}_s,
\tau\setminus \sigma^{\h i}_s\subseteq
\rho'^{-1}(\h i)\cap \rho^{-1}(s). $$
Since $[(\rho,\rho')_s]\cap[T_{\h S}]\ne\emptyset$,
there exists $l\geq n$ such that $((\rho,\rho')_s,l)\in \h S$.
By definition of $\h C((\rho,\rho')_s,l)$,
$<\tau,\Phi_{\h i},n>\in \h C((\rho,\rho')_s,l)$. Thus we are done.

\end{proof}

Now comes the combinatorics concerning Schnorr randomness.
For a Turing functional $\Psi$,
let $\Phi_\Psi=(\forall n)(\exists t )(\forall n'\leq n) \Psi^G(n')[t]\downarrow$;
we say $d\vdash_{s,i}\Psi\text{ is total }$ iff
$d\vdash_{s,i}\Phi_\Psi$;
for a finite set $V\subseteq 2^{<\omega}$,
let $\psi_\Psi(\sigma,m',t,V) =
(\Psi^\sigma\uhr m')[t]\downarrow\in [V]^\preceq$;
let $\mcal{U}_{\sigma,\Psi,V} =
\big\{
X\subseteq \omega: (\exists \rho\subseteq X\setminus|\sigma|)(\exists m)
(\Psi^{\sigma\cup\rho}\uhr m)\downarrow\notin [V]^\preceq
\big\}$;
for a Schnorr
test $\mb{V} = (V_0,V_1,\cdots)$, we
let $\psi_{\Psi,\mb{V}}(\sigma,n,m,m',t) =
m>n\wedge m'>n\wedge t>n\wedge \psi_{\Psi}(\sigma,m',t,V_m)$;
and let $\Phi_{\Psi,\mb{V}} = (\forall n)(\exists m,m',t) \psi_{\Psi,\mb{V}}(G,n,m,m',t)$.
Note that $\Phi_{\Psi,\mb{V}}(G)$
simply means that
\begin{align}\nonumber
\text{for every }n\in\omega\text{ there exists an }m>n
\text{ such that }\Psi^G\in [V_m].
\end{align}
 i.e., The test $\mb{V}$ succeeds
on $\Psi^G$.
\def\Q{Q}

\begin{lemma}\label{schlem6}
If $d\vdash_{s,i}\Phi_\Psi$, then there exists
a $\h d\leq d$, a Schnorr test $\mb{V}$ such that
$d\vdash_{\h s,i}\Phi_{\Psi,\mb{V}}$
for all child branch $\h s$ of $s$.
\end{lemma}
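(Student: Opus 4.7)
The plan is to build the computable Schnorr test $\mbV=(V_m)_{m\in\omega}$ and the extension $\h d$ in tandem, aiming to apply a single-part adaptation of Lemma \ref{schlemforcing} to the $\Pi_2^0$ formula $\Phi_{\Psi,\mbV}$. The goal is to arrange that $d$ does \emph{not} potentially force $\neg\Phi_{\Psi,\mbV}$ on branch $s$ at part $i$; the forcing lemma will then deliver $\h d\subseteq d$ with $\h d\vdash_{\h s,i}\Phi_{\Psi,\mbV}$ on every child branch $\h s$ of $s$.

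First, I would define $V_m$ computably as follows: at stage $m$, for each $\sigma\in 2^{<\omega}$ with $|\sigma|\leq 2^m$, check whether $(\Psi^\sigma\uhr (|\sigma|+2m+2))[m]\downarrow$, and if so admit $\Psi^\sigma\uhr (|\sigma|+2m+2)$ into $V_m$. This is clearly uniformly computable, each admission contributes measure $2^{-(|\sigma|+2m+2)}$, and summing over $|\sigma|\leq 2^m$ yields $\m(V_m)\leq 2^{-m}$, so $\mbV$ is a valid Schnorr test. Moreover, for every $\sigma$ and every threshold $m_0$, every truly-convergent $\Psi^\sigma$-computation is eventually admitted into $V_m$ for some $m\geq m_0$ (take $m$ larger than $|\sigma|/2$, larger than $m_0$, and larger than the convergence time).

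Second, I would show that this $\mbV$ blocks any potential forcing of $\neg\Phi_{\Psi,\mbV}$ on branch $s$. Suppose for contradiction that $Y\in [T_S]$ and witness strings $\mu^{i,j}_s\subseteq (\ch f^{-1}(i)\cap Y^{-1}(s))\setminus |\sigma^i_s|$, $j\leq j_i$, together with a partition $f:\omega\to l$, witness that the class $\bigcap_{j,n}\mcal{U}_{<\sigma^i_s\cup \mu^{i,j}_s,\Phi_{\Psi,\mbV},n>}\cap \bigcap_{e\in C_Y}\mcal{U}_e$ is not large. By (\ref{scheq8}), $\bigcap_{e\in C_Y}\mcal{U}_e$ is large, so some part $X=f^{-1}(i^*)$ lies inside it. However, since $d\vdash_{s,i}\Phi_\Psi$, forcing item (5) applied at $\tau:=\sigma^i_s\cup \mu^{i,j}_s\in [d^i_s]$ (for each $j$) yields, for every $n_0$, some $\tau'\subseteq X\setminus |\tau|$ with $\Psi^{\tau\cup \tau'}\uhr n_0$ convergent; by our construction, this convergent image is then deposited into $V_m$ for arbitrarily large $m$, placing $X$ into $\mcal{U}_{<\tau,\Phi_{\Psi,\mbV},n>}$ for every $n$. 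Thus $X$ lies in $\bigcap_{j,n}\mcal{U}_{<\sigma^i_s\cup\mu^{i,j}_s,\Phi_{\Psi,\mbV},n>}\cap \bigcap_{e\in C_Y}\mcal{U}_e$, contradicting non-largeness.

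The main obstacle is step two: the measure budget $\m(V_m)\leq 2^{-m}$ must be reconciled with the requirement that $\mbV$ catch the relevant convergent $\Psi^{\sigma^i_s\cup\mu^{i,j}_s\cup \tau'}$-computations. The leverage is that largeness is existential --- we only need a single captured computation per witness tuple $(\mu^{i,j},n)$, not covering almost all $X$ --- and the exponential length-discounting built into the admission length $|\sigma|+2m+2$ leaves just enough slack within the $2^{-m}$ budget; getting this synchronization precisely right, while preserving uniform computability of $V_m$ in $m$, is the main technical point.
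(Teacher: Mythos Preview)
Your proposed Schnorr test does not do what you claim. You define $V_m$ to contain $\Psi^\sigma\uhr(|\sigma|+2m+2)$ whenever $|\sigma|\le 2^m$ and this computation halts within $m$ steps, and then assert that ``every truly-convergent $\Psi^\sigma$-computation is eventually admitted into $V_m$ for some $m\ge m_0$.'' But for a fixed finite $\sigma$, the required output length $|\sigma|+2m+2$ grows with $m$, so admission into $V_m$ demands that $\Psi^\sigma(k)$ converge for all $k<|\sigma|+2m+2$ and that the entire block converge in at most $m$ steps. Neither is guaranteed: $\Psi^\sigma$ is in general only partial, and even where defined its running time can dominate $m$. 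In your step two you obtain, for each $n_0$, a finite $\tau'\subseteq X$ with $\Psi^{\tau\cup\tau'}\uhr n_0\downarrow$; but $\tau'$ and its use depend on $n_0$, and you have no control relating $|\tau\cup\tau'|$, $n_0$, and the halting time, so you cannot match the admission condition $|\tau\cup\tau'|+2m+2\le n_0$ together with halting in $m$ steps. Replacing the time bound $[m]$ by any fixed computable bound $[h(m)]$ does not help for the same reason, and dropping the bound destroys the Schnorr property.

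This is exactly the obstacle the paper's argument is designed to overcome. Rather than a universal test depending only on $\Psi$, the paper exploits the largeness structure of the condition $d$ itself: Claim~\ref{schclaim1} shows, via a counting argument over $\Pi^0_1$ classes of partitions, that for every $\lambda>0$ one can \emph{search} computably for some $(\rho,l)\in S$ and a set $V$ of measure $\ge 1-\lambda$ so that the class forcing $\Psi$-outputs to avoid $V$ remains $l$-large. The test component $V_m$ is then the \emph{complement} of such a $V$, and $\hat S,\hat C$ are built in lockstep with the enumeration of these witnesses. The test is computable because each $V_m$ is discovered by a terminating search whose success is guaranteed by the claim; no uniform time bound on $\Psi$ is needed. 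A secondary point: even if your test worked, invoking Lemma~\ref{schlemforcing}(2) as a black box would only yield $\hat d\vdash_{\hat s,\hat i}\Phi_{\Psi,\mbV}$ for \emph{some} $\hat i$ determined by the child branch, not for the fixed part $i$ in the statement; the paper avoids this by constructing $\hat S,\hat C$ directly without splitting branch $s$.
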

\begin{proof}

We firstly establish the following.
\begin{claim}\label{schclaim1}
For every $0<\lambda$, every $n$, there exists
$(\rho,l)\in S$ with $|\rho|\geq n$,
a $V\subseteq 2^{<
\omega}$ with $\m(V)\geq 1-\lambda$
such that
$$(\bigcap\limits_{e\in C(\rho,l)}\mcal{U}_e) \cap
\bigcap
\big\{ \mcal{U}_{\sigma,\Psi,V}:
\sigma\succeq\sigma^i_s
\wedge \sigma\setminus \sigma^i_s\subseteq
\rho^{-1}(s)\big\}$$
is $l$-large.

\end{claim}
\begin{proof}
For every finite set $V\subseteq 2^{<\omega}$, every $(\rho,l)\in S$,
consider the following $\Pi_1^0$ class $\Q_{\Psi,\rho,V,l}$ of $l$-partition of
$\omega$,
which roughly speaking forces $\Psi^G$ to be in $[V]$.
 More specifically, an $X\in l^\omega$ is in $\Q_{\Psi,\rho,V,l}$
iff  for every $l'<l$:
\begin{align}
 X^{-1}(l')\notin
(\bigcap\limits_{e\in C(\rho,l)}\mcal{U}_e)\cap
\bigcap
 \big\{\mcal{U}_{\sigma,\Psi,V}:
 \sigma\succeq\sigma^i_s
\wedge \sigma\setminus \sigma^i_s\subseteq
\rho^{-1}(s)
\big\}.
\end{align}
By definition of $\Q_{\Psi,\rho,V,l}$, for every  $(\rho,l)\in S$,
every $X\in \Q_{\Psi,\rho,V,l}$,
every $l'<l$, if $X^{-1}(l')\in
\bigcap_{e\in C(\rho,l)}\mcal{U}_e$,
then there exists $\sigma\succeq\sigma^i_s$
with $\sigma\setminus \sigma^i_s\subseteq
\rho^{-1}(s)$ 
such that for every $\tau\succeq\sigma$ with
$\tau\setminus\sigma\subseteq X^{-1}(l')$,
every $m>\max\{|\eta|:\eta\in V\}$,
\begin{align}\nonumber
\Psi^\tau\uhr m\downarrow\rightarrow
\Psi^\tau\uhr m\in [V]^\preceq.
\end{align}
Let $(\rho,l)\in S$ be such that $[\rho]\cap [T_S]\ne\emptyset
\wedge |\rho|\geq n\wedge l\geq N$
and
$<\sigma,\Phi_\Psi,N>\in C(\rho,l)$
for all $\sigma\succeq\sigma^i_s$
with $ \sigma\setminus \sigma^i_s\subseteq
\rho^{-1}(s)$ 
where
 $N$ is sufficiently large so that
 $2^{|\rho|}\cdot 2^{-N}\leq \lambda$.
By definition of forcing item (5),
such $(\rho,l)$ exists.

It suffices to show that
there exists a $V\subseteq 2^N$ with
$\m(V)\geq 1-\lambda$, such that
$\Q_{\Psi,\rho,V,l}=\emptyset$.
Suppose on the contrary that this is not the case,
we select a member from each $\Q_{\Psi,\rho,V,l}$
where $V$ traverse all subset of $2^N$ and show
that the refinement of these members together with
original condition forces the Turing functional
to be non total
since the output of that Turing functional must be
a common element of these $[V]$.

More precisely, suppose on the contrary, for each $V\subseteq 2^N$
with
$\m(V)\geq 1-\lambda$,
we have
$\Q_{\Psi,\rho,V,l}\ne\emptyset$.
Let $X_V\in \Q_{\Psi,\rho,V,l}$
and
\begin{align}
&X = (X_V: V\subseteq 2^N\wedge \m(V)\geq 1-\lambda),
\\ \nonumber
&\text{ i.e.,
$X$ is the refinement of all $X_V$.}
\end{align}
Since $[\rho]\cap [T_S]\ne\emptyset$,
$(\rho,l')\in S$ for infinitely many $l'$.
This means $\bigcap_{e\in C(\rho,l)}\mcal{U}_e$
is large. Therefore, there exists an
$l'$ such that $X^{-1}(l')\in \bigcap_{e\in C(\rho,l)}\mcal{U}_e$.
For each $V\subseteq 2^N$ with $\m(V)\geq 1-\lambda$,
 suppose $X^{-1}(l')\subseteq X_V^{-1}(l_V)$,
 since every $\mcal{U}_e$ is closed upward,
 we have $X^{-1}(l_V)\in \bigcap\limits_{e\in C(\rho,l)}\mcal{U}_e$.
This implies, by definition of
$\Q_{\Psi,\rho,V,l}$, that for every $V\subseteq 2^N$ with
$\m(V)\geq 1-\lambda$,
$$X_V^{-1}(l_V)\notin
\bigcap
 \big\{\mcal{U}_{\sigma,\Psi,V}:
 \sigma\succeq\sigma^i_s
\wedge \sigma\setminus \sigma^i_s\subseteq
\rho^{-1}(s)\big\}.$$
Therefore,  for every $V\subseteq 2^N$ with
$\m(V)\geq 1-\lambda$,
\begin{align}\label{scheq6}
X^{-1}(l')\notin\bigcap
 \big\{\mcal{U}_{\sigma,\Psi,V}
 : \sigma\succeq\sigma^i_s
\wedge \sigma\setminus \sigma^i_s\subseteq
\rho^{-1}(s)\big\}.
\end{align}
For every
 $\sigma\succeq\sigma^i_s$
with $ \sigma\setminus \sigma^i_s\subseteq
\rho^{-1}(s)$, 
since $<\sigma,\Phi_\Psi,N>\in C(\rho,l)$
and since $X^{-1}(l')\in \bigcap_{e\in C(\rho,l)}\mcal{U}_e$,
there exists a $\tau_\sigma\succeq\sigma$
with $\tau_\sigma\setminus \sigma\subseteq
X^{-1}(l')$ such that
$(\Psi^{\tau_\sigma}\uhr N)\downarrow$.
Let $$V^* = 2^N\setminus
\big\{\Psi^{\tau_\sigma}\uhr N:
\sigma\succeq\sigma^i_s
\wedge \sigma\setminus \sigma^i_s\subseteq
\rho^{-1}(s)\big\}. $$
Note that $\m(V^*)\geq 1-2^{|\rho|}\cdot 2^{-N}\geq
1-\lambda$ and by definition of $V^*$ and
$\mcal{U}_{\sigma,\Psi,V}$,
$$X^{-1}(l')\in\bigcap
 \big\{\mcal{U}_{\sigma,\Psi,V^*}:
 \sigma\succeq\sigma^i_s
\wedge \sigma\setminus \sigma^i_s\subseteq
\rho^{-1}(s)\big\},$$
witnessed by those $\tau_\sigma$.
This  contradicts with (\ref{scheq6}).

\end{proof}

Now we define the following c.e. set $\h S$ together with
a Schnorr test $\mb{V} = (V_0,V_1,\cdots)$ as following.
Suppose we have computed
$\h S[t] = \{(\rho_v,l_v): v< \h u\}$
and $V_m,m< u$.
Wait for the next time that it is found that
for some $N>u$, some
finite $V\subseteq 2^N$ with
$\m(V)\leq 4^{-u-1}$,
some $(\rho,l)\in S$ with
$|\rho|\geq \max\{l_v: v< \h u\}$,
the class
$$(\bigcap\limits_{e\in C(\rho,l)}\mcal{U}_e)\cap
\bigcap
 \big\{\mcal{U}_{\sigma,\Psi,2^N\setminus V}:
 \sigma\succeq\sigma^i_s
\wedge \sigma\setminus \sigma^i_s\subseteq
\rho^{-1}(s)\big\} $$ 
is $l$-large
(which exists by Claim \ref{schclaim1}).
Then for each $ \rho'\preceq\rho$ with $\rho'\in T_S$, enumerate
$(\rho',l)$ into $\h S$ (for which we say that $(\rho',l)$ is \emph{enumerated
into $\h S$ at step $\h u$ due to $(\rho,l)$}); and
let $V_u = V$.
Let $\mathbf{V} = (V_0,V_1,\cdots)$ as computed above.
For each $(\rho,l)\in \h S$,  enumerated into $\h S$ at step $u$,
define
$$\h C(\rho,l) = C(\rho,l)\cup
\big\{<\sigma,\Phi_{\Psi,\mb{V}},u'>:\sigma\succeq\sigma^i_s\wedge
 \sigma\setminus \sigma^i_s\subseteq
\rho^{-1}(s)
, u'<u\big\}.$$
By our construction of $\mathbf{V}$,
\begin{align}\nonumber
\bigcap\limits_{e\in \h C(\rho,l)}\mcal{U}_e
\text{ is $l$-large.}
\end{align}
Let $\h d= (\sigma^i_s,\h S,\h C: i<\ch k,s<r)$.
It's easy to verify that $\h d\subseteq  d$ (especially item (3) of definition of extension)
is a condition.
It remains to prove that $\h d\vdash_{s,i}\Phi_{\Psi,\mb{V}}$.
Fix a $\rho\in T_{\h S}$ with $[\rho]\cap [T_{\h S}]\ne\emptyset$,
a $\tau\succeq \sigma^i_s\wedge\tau\setminus\sigma^i_s\subseteq
\rho^{-1}(s)$ and an $n\in\omega$. 
We need to show that for some $l\geq n$,
$(\rho,l)\in \h S$ and $<\tau,\Phi_{\Psi,\mb{V}},n>\in \h C(\rho,l)$.
This follows by checking the definition of $\h S$ and those $\h C$ set.
More specifically, since $[\rho]\cap [T_S]\ne\emptyset$,
we have $(\rho,\h l)\in \h S$ for infinitely many $\h l$.
Suppose for some $ l\geq n$, $(\rho,l)$ is enumerated into $\h S$
due to $(\h\rho, l)$ at step $u$ with $u>n$.
By definition of $C( \rho, l)$,
  $<\tau,\Phi_{\Psi,\mb{V}},n>\in \h C(\rho,l)$.

\end{proof}

Let $d_0\geq d_1\geq\cdots$ be a sequence of condition.
We say $\{d_t\}_{t\in\omega}$ is \emph{$2$-generic} if for every
$\ch k$ many $\Pi_2^0$  formula $\Phi_i,i<\ch k$,
there exists a $t$ such that for every valid branch
$s$ of $d_t$, there exists a $i$ such that
$d_t\vdash_{s,i}\Phi_i\vee d_t\vdash_{s,i}\neg\Phi_i$.
By Lemma \ref{schlemforcing}, such $2$-generic sequence exists.
By Lemma \ref{schlem5} item (1), we may also assume that each $d_t$ is \heteriditarillyvalid.
By Lemma \ref{schlem5} item (4), the set of valid branches
of $d_t$ forms a finitely branching infinite tree $\mcal{T}$
(where the partial order is given by the child branch relation).
For convenience, we also assume that for every $t$, there is
a $n\in\omega$ such that for
every initial segment component $\sigma$ of $d_t$,
$|\sigma| = n$.
Let $(s_t:t\in\omega)$ be a path along $\mcal{T}$.
By paring argument, there exists a $i^*<\ch k$ such that
for every $\Pi_2^0$ formula $\Phi$,
there exists a $t$ such that $d_t\vdash_{s_t,i^*}\Phi\vee
d_t\vdash_{s_t,i^*}\neg\Phi$.
Let $G^* =\cup_t
\sigma^{i^*}_{s_t}$ which is well defined since
$\sigma^{i^*}_{s_{t+1}}\succeq \sigma^{i^*}_{s_t}$.
We need to show that forcing implies truth.
Let $\Phi(G) = \forall n\exists m\psi(G,n,m)$
where $\psi$ is such that
\begin{align}\label{scheq10}
\text{ for every $n,m$, every $\tau'\succeq\tau$,
$\psi(\tau,n,m)\rightarrow \psi(\tau',n,m)$.
}
\end{align}
Note that all formulas we concern about,
namely $\Phi_{\Psi}, \Phi_{\Psi,\mbV}$, the corresponding $\psi$ formula
satisfy (\ref{scheq10}).
\begin{lemma}[Truth]\label{schlem7}
If $d_t\vdash_{s_t,i^*}\Phi$
 $(d_t\vdash_{s_t,i^*}\neg\Phi$ respectively$)$ then
$\Phi(G^*)$ $(\neg\Phi(G^*)$ respectively$)$ is true.
\end{lemma}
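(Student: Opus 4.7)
The plan is to prove both implications by combining the forcing clauses, the validity of the relevant branches, and the pairing choice of $i^*$. The central auxiliary fact I need is: $(\star)$ for every $t'$ and every $M \geq |\sigma^{i^*,t'}_{s_{t'}}|$, the initial segment $G^*\uhr M$ belongs to $[d_{t'}{}^{i^*}_{s_{t'}}]$, where $\sigma^{i^*,t'}_{s_{t'}}$ denotes the initial-segment component of $d_{t'}$ at branch $s_{t'}$ and part $i^*$. I would establish $(\star)$ by iterating the extension relation: the refinement strings $\tilde\rho$ witnessing successive extensions $d_{t'+1}\subseteq d_{t'}$, together with their embedding maps $g$, assemble into a path $Y_{t'}\in [T_{S_{t'}}]$ along which $G^*\setminus \sigma^{i^*,t'}_{s_{t'}} \subseteq Y_{t'}^{-1}(s_{t'})$; this same $Y_{t'}$ then witnesses $G^*\uhr M \in [d_{t'}{}^{i^*}_{s_{t'}}]$ for every $M$.

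Granted $(\star)$, the easy direction $d_t\vdash_{s_t,i^*}\neg\Phi \Rightarrow \neg\Phi(G^*)$ is immediate: the forcing clauses for $\exists n\forall m\neg\psi$ and for $\forall m\neg\psi$ furnish some $n$ with $\neg\psi(\tau,n,m)$ for every $\tau\in [d_t{}^{i^*}_{s_t}]$ and every $m$. Instantiating $\tau = G^*\uhr M$ for every $M$ (legal by $(\star)$) and invoking monotonicity (\ref{scheq10}) gives $\neg\psi(G^*,n,m)$ for every $m$, hence $\neg\Phi(G^*)$.

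For the main direction, suppose $d_t\vdash_{s_t,i^*}\Phi$ and fix $n\in\omega$. Apply the pairing choice of $i^*$ to the $\Pi_2$ (in fact $\Pi_1$) formula $\forall m\neg\psi(G,n,m)$, producing a step $t_n$ at which either $d_{t_n}\vdash_{s_{t_n},i^*}\forall m\neg\psi(G,n,m)$ or $d_{t_n}\vdash_{s_{t_n},i^*}\exists m\psi(G,n,m)$. I would rule out the first alternative by contradiction: setting $t^* = \max(t,t_n)$, the Extension Lemma yields both $d_{t^*}\vdash_{s_{t^*},i^*}\Phi$ and $d_{t^*}\vdash_{s_{t^*},i^*}\forall m\neg\psi(G,n,m)$. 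Pick any $\rho\in T_{S_{t^*}}$ with $[\rho]\cap [T_{S_{t^*}}]\ne\emptyset$. The $\Pi_2$ clause for $\Phi$ supplies $l$ with $(\rho,l)\in S_{t^*}$ and $<\sigma^{i^*,t^*}_{s_{t^*}},\Phi,n>\in C_{t^*}(\rho,l)$; validity of branch $s_{t^*}$ over $\rho$ (Definition \ref{schdef3}) supplies $Y\in [T_{S_{t^*}}]\cap [\rho]$ with $Y^{-1}(s_{t^*})\in \mcal{U}_{<\sigma^{i^*,t^*}_{s_{t^*}},\Phi,n>}$. Unfolding this class yields $\tau'\subseteq Y^{-1}(s_{t^*})\setminus |\sigma^{i^*,t^*}_{s_{t^*}}|$ and $m$ with $\psi(\sigma^{i^*,t^*}_{s_{t^*}}\cup \tau',n,m)$. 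Since $\sigma^{i^*,t^*}_{s_{t^*}}\cup \tau' \in [d_{t^*}{}^{i^*}_{s_{t^*}}]$ (witnessed by the same $Y$), this contradicts the second hypothesized forcing. Hence the second alternative must hold at $t_n$; the $\Sigma_1$ forcing clause then gives $\psi(\sigma^{i^*,t_n}_{s_{t_n}},n,m)$ for some $m$, and monotonicity (\ref{scheq10}) promotes this to $\psi(G^*,n,m)$. Since $n$ was arbitrary, $\Phi(G^*)$ holds.

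The primary obstacle is $(\star)$: verifying that the composition of the embedding maps $g$ across $t'\to\infty$ is coherent and produces a genuine path in $[T_{S_{t'}}]$ for every $t'$. This bookkeeping is what legitimizes treating initial segments of $G^*$ as bona fide members of $[d_{t'}{}^{i^*}_{s_{t'}}]$ when applying the forcing clauses in both directions.
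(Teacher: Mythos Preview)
Your overall strategy coincides with the paper's: both directions hinge on $(\star)$ (which the paper states as ``there exists $X\in[T_{S_t}]$ with $G^*\setminus\sigma^{i^*}_{s_t}\subseteq X^{-1}(s_t)$''), and in the $\Phi$ direction both arguments pass to a later stage where an auxiliary formula built from $\psi(\cdot,n,\cdot)$ is decided, then derive a contradiction from the ``wrong'' alternative using validity.

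There is, however, a genuine gap in your contradiction step. The $2$-generic sequence and the pairing choice of $i^*$ are defined only for $\Pi_2^0$ formulas (see the definition preceding the lemma and Lemma~\ref{schlemforcing}). When you ``apply the pairing choice of $i^*$ to the $\Pi_2$ (in fact $\Pi_1$) formula $\forall m\neg\psi(G,n,m)$'', what you actually obtain in the first alternative is clause-(5) forcing of the padded formula $\forall m\,\exists m'\,\neg\psi(G,n,m)$, \emph{not} clause-(3) forcing of $\forall m\,\neg\psi(G,n,m)$. Clause~(5) does not assert $\neg\psi(\tau,n,m)$ for all $\tau\in[d_{t^*}{}^{i^*}_{s_{t^*}}]$; it only places indices of the form $\langle\tau,\Phi',m\rangle$ into the $C$-sets. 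So exhibiting a single $\tau\in[d_{t^*}{}^{i^*}_{s_{t^*}}]$ with $\psi(\tau,n,m)$ does not yet contradict anything.

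The repair is exactly what the paper does: having found $\tau$ and $m^*$ with $\psi(\tau,n,m^*)$ and $\tau\setminus\sigma^{i^*}_{s_{t^*}}\subseteq Y^{-1}(s_{t^*})$, apply clause~(5) of the first alternative at this $\tau$ and at some $\hat\rho\prec Y$ to get $\langle\tau,\Phi',m^*\rangle\in C_{t^*}(\hat\rho,\hat l)$; then invoke validity of branch $s_{t^*}$ a \emph{second} time to obtain $\hat X$ with $\hat X^{-1}(s_{t^*})\in\mcal{U}_{\langle\tau,\Phi',m^*\rangle}$, which unfolds to some $\hat\tau\succeq\tau$ with $\neg\psi(\hat\tau,n,m^*)$, contradicting $\psi(\tau,n,m^*)$ via~(\ref{scheq10}). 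In short, validity must be used twice, not once. (Your treatment of the second alternative is fine: clause~(4) then clause~(3) with the dummy variable does yield $\psi(\sigma^{i^*,t_n}_{s_{t_n}},n,m)$, even though you label it ``$\Sigma_1$''.)
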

\begin{proof}

This is Lemma 2.27 of \cite{monin2019pigeons}.

The proof for the case
$d_t\vdash_{s_t,i^*}\neg\Phi$ is simple.
Note that there exists
a $X\in [T_{S_t}]$ such that
$G^*\setminus \sigma^{i^*}_{s_t}\subseteq
X^{-1}(s_t)$. Thus the conclusion follows by definition of
forcing.

\ \\

Now we prove the case $d_t\vdash_{s_t,i^*}\Phi$.
Fix a $n$, we need to show that
$\exists m \psi(G^*,n,m)$.
Consider $\h\Phi = \exists m\forall \h n\psi(G,n,m)$
(yes $\h n$ does not actually appears in $\psi(G,n,m)$).
By $2$-generic
of $\{d_t\}_{t\in\omega}$ and definition of $i^*$, there exists
a $\h t\geq t$ such that
$$d_{\h t}\vdash_{s_{\h t},i^*}\h\Phi\vee
d_{\h t}\vdash_{s_{\h t},i^*}\neg\h \Phi.$$
If $d_{\h t}\vdash_{s_{\h t},i^*}\h\Phi$, which means
by definition of forcing item (4), for some $m$,
$d_{\h t}\vdash_{s_{\h t},i^*} \forall \h n\psi(G,n,m)$.
Thus
we are done by definition of forcing item (3).

Suppose $d_{\h t}\vdash_{s_{\h t},i^*}\neg\h\Phi$, i.e.,
$d_{\h t}\vdash_{s_{\h t},i^*}\forall m\exists \h n
\neg\psi(G,n,m)$.
Because $d_t\vdash_{s_t,i^*}\Phi$
(therefore $d_{\h t}\vdash_{s_{\h t},i^*}\Phi$),
 we have that
for some $(\rho,l)\in S_{\h t}$ with
$[\rho]\cap [T_{S_{\h t}}]\ne\emptyset$,
$ <\sigma^{i^*}_{s_{\h t}},\Phi,n>\in
C_{\h t}(\rho,l)$.
Since $s_{\h t}$ is valid in $d_{\h t}$,
there exists an $X\in [\rho]\cap [T_{S_{\h t}}]$,
such that $$X^{-1}(s_{\h t})\in \bigcap\limits_{e\in C_{\h t}(\rho,l)}
\mcal{U}_e\subseteq
 \mcal{U}_{<\sigma^{i^*}_{s_{\h t}},\Phi,n>}.$$
 Unfolding the definition of $ \mcal{U}_{<\sigma^{i^*}_{s_{\h t}},\Phi,n>}$,
 there exists a $\tau\succeq \sigma^{i^*}_{s_{\h t}}$ with
 $\tau\setminus \sigma^{i^*}_{s_{\h t}}\subseteq
 X^{-1}(s_{\h t})$ such that
 $\psi(\tau,n,m^*)$ for some $m^*$.
Since $d_{\h t}\vdash_{s_{\h t},i^*}\forall m\exists \h n
\neg\psi(G,n,m)$, suppose $\tau\setminus\sigma^{i^*}_{s_{\h t}}
\subseteq \h\rho^{-1}(s_{\h t})$ and $\h\rho\prec X$,
there exists a $(\h \rho,\h l)\in S_{\h t}$ with
$[\h\rho]\cap [T_{S_{\h t}}]\ne\emptyset$
such that $<\tau,\neg\h\Phi,m^*>\in C_{\h t}(\h\rho,\h l)$.
Since branch $s_{\h t}$ is valid in $d_{\h t}$,
there exists a $\h X\in [\h\rho]\cap [T_{S_{\h t}}]$
such that $$\h X^{-1}(s_{\h t})
\in \bigcap\limits_{e\in C_{\h t}(\h\rho,\h l)}
\mcal{U}_e\subseteq
 \mcal{U}_{<\tau,\neg\h\Phi,m^*>}. $$
 Unfolding the definition of $
 \mcal{U}_{<\tau,\neg\h\Phi,m^*>}$,
 there exists $\h\tau\succeq \tau$
 such that $\neg\psi(\h\tau,n,m^*)$,
 a contradiction with $\psi(\tau,n,m^*)$
since $\psi$ satisfies (\ref{scheq10}).

\end{proof}

Now we can prove Theorem \ref{schth4} and \ref{schth5}.
\begin{proof}
[Proof of Theorem \ref{schth4}]

Let $\Phi_p = \forall n\exists m [m>n\wedge m\in G]$.
Starting with
the following condition $d_0 = (\sigma^i_s,S_0,C_0: i<\ch k,s<r_0)$
where
$r_0 = 1$, $\sigma^i_s = \varepsilon$,
$S_0 = \{(\rho,l): \rho\in r_0^{<\omega},l\in\omega\}$,
$C_0(\rho,l) = \big\{<\tau,\Phi_p,n>: n\leq l\wedge \tau\subseteq \rho\big\}$.
By definition of $\Phi_p$, it's easy to see that
$d_0$ is a condition (especially the part
$\bigcap_{e\in C_0(\rho,l)}\mcal{U}_e$ is $l$-large).

Let $d_0\supseteq d_1\supseteq\cdots$ be a $2$-generic sequence as above
and additionally:

\begin{align} \label{scheq9}
&\text{ For every Turing functional $\Psi$, every $t$,
every branch $s$ of $d_t$ and every $i<\ch k$, }
\\ \nonumber
&\ \ \ \text{ if $d_t\vdash_{s,i}\Psi$ is total, }
\text{then
there exists a $\h t\geq t$, a Schnorr test
$\mb{V}$ such that }
\\ \nonumber
&\ \ \ \text{ for every
child branch $\h s$ of $s$,
$d_{\h t}\vdash_{\h s,i} \Phi_{\Psi,\mb{V}}$.}
\end{align}
This is possible by Lemma \ref{schlem6}.
Let $s_t,t\in\omega$ be a branch sequence such that
$s_{t+1}$ is a child of $s_t$ and
each $s_t$ is valid in $d_t$, let
$G^* = \cup_{t}
\sigma^{i^*}_{s_t}$.
Clearly by definition of condition item (1), $G^*\subseteq \ch f^{-1}(i^*)$.
By definition of
$d_0$, $d_0\vdash_{s_0,i^*}\Phi_p$,
therefore $G^*$ is infinite by Lemma \ref{schlem7}.
Moreover, by (\ref{scheq9}) and
 the construction of $\{d_t\}_{t\in\omega}$
 (also by Lemma \ref{schlem7}),
for every Turing functional $\Psi$, either
$\Psi^G$ is not total or there exists a  Schnorr test
$\mb{V}$ such that $\mb{V}$ succeeds on $\Psi^G$. Thus we are done.

\end{proof}
\begin{proof}[Proof of Theorem \ref{schth5}]

Let $d_0\geq d_1$ be as in the proof of Theorem \ref{schth4}
and additionally, for every $A\in \mcal{A}$,
there exists $t$ such that
for every $X\in [T_{S_t}]$, every $s<r_t$,
 $X^{-1}(s)\subseteq^*A\vee X^{-1}(s)\subseteq^* \overline{A}$
 (this is possible just like what we do in
 proof of Lemma \ref{schlemforcing} item (1)).
 Let  $G^*$  be as in the proof of Theorem \ref{schth4},
 we have that $G^*\subseteq f^{-1}(i^*)$ is infinite
 and does not compute a Schnorr random real; moreover, because of
 the additional requirement on $\{d_t\}_{t\in\omega}$,
 $G^*$ is $\mcal{A}$-cohesive. Thus we are done.

\end{proof}

The above proof and the
forcing we used depends heavily on the effectiveness of $S$ component
 of a condition.
 And to preserve the effectiveness
 of $S$-component, we can not realize an arbitrary partition
 by $S$-component. Therefore we do not know whether the following holds.

\begin{question}
Does every countable collection $\mcal{A}\subseteq 2^\omega$
admit an infinite $\mcal{A}$-cohesive set $G$ such that
$G$ does not compute any Schnorr random real.
\end{question}

\section{Weakness of Schnorr covering}
\label{secanotherques}
\def\mbV{\mb{V}}
\def\mST{\mcal{ST}}

An oracle $A$
\emph{Schnorr cover} a class $\mcal{A}$ if there
exists an $A$-Schnorr test $(V_n:n\in\omega)$
such that $\mcal{A}\subseteq \bigcap_n \bigcup_{m> n}V_m$.
A set $A\subseteq \omega$ is bi-immune
if neither $A$ or $\overline{A}$
contains an infinite computable set.
A Turing degree is bi-immune if it computes
a bi-immune set.
In the end of \cite{brendle2015analogy}, it is asked that whether
there exists
 a degree that is not bi-immune
 and Schnorr cover REC.

\begin{theorem}
For any countable class $\mcal{A}\subseteq 2^\omega$,
there exists a Turing degree $A$
such that $A$ Schnorr cover $\mcal{A}$
and $A$ is not bi-immune.
\end{theorem}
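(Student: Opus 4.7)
The plan is to construct the oracle $A$ explicitly by coding the countable class $\mcal{A}$ along a very sparse computable subset of $\omega$ and filling the remaining coordinates with $1$'s, so that $A$ itself contains an infinite computable set while still being able to recover every member of $\mcal{A}$.

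First, fix an enumeration $(A_j)_{j \in \omega}$ of $\mcal{A}$ (repeating entries if $\mcal{A}$ is finite), an infinite--coinfinite computable set $S \subseteq \omega$ together with a computable bijection $\phi : \omega \to S$, and a computable pairing $\langle \cdot, \cdot \rangle : \omega \times \omega \to \omega$. Define $A$ by placing every element of $\overline{S}$ into $A$, and, for $x = \phi(\langle j, n \rangle) \in S$, placing $x$ in $A$ iff $n \in A_j$. Then $\overline{S} \subseteq A$ is an infinite computable set, so the degree of $A$ is not bi-immune; meanwhile $A$ uniformly computes each $A_j$ simply by reading off the bits coded on $S$.

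Next, I would build the Schnorr test. For $n \in \omega$ and $j < n$, let $\sigma_{j,n} = A_j \uhr (n + j + 2)$, which $A$ computes uniformly in $j,n$. Setting $V_n = \{\sigma_{j,n} : j < n\}$ gives an $A$-computable sequence of finite subsets of $2^{<\omega}$ with
\[
\m(V_n) \le \sum_{j<n} 2^{-(n+j+2)} < 2^{-n},
\]
hence an $A$-Schnorr test in the sense used in this paper. For each $A_j \in \mcal{A}$ and each $n > j$, the string $\sigma_{j,n}$ lies in $V_n$ and is an initial segment of $A_j$, so $A_j \in [V_n]$; thus $A_j \in \bigcap_n \bigcup_{m>n} [V_m]$, and $A$ Schnorr covers $\mcal{A}$.

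There is no real obstacle beyond balancing two competing demands on $A$: the coded copy of $\mcal{A}$ inside $A$ must leave enough of $\omega$ free to host an infinite computable set, and the initial segments placed in each $V_n$ must be short enough to cover all previously enumerated $A_j$'s yet long enough to keep the total measure at level $n$ below $2^{-n}$. Putting the coding on the sparse set $S$ resolves the first, and choosing the length $n+j+2$ so that the geometric sum in $j$ is bounded by $2^{-n-1}$ resolves the second.
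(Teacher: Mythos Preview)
Your argument confuses two different notions of ``not bi-immune.'' You show that the \emph{set} $A$ you build is not bi-immune, because it contains the infinite computable set $\overline{S}$. But the theorem asks for a Turing \emph{degree} that is not bi-immune, and by the paper's definition a degree is bi-immune iff it \emph{computes} some bi-immune set. Hence ``the degree of $A$ is not bi-immune'' means: for every $B \leq_T A$, either $B$ or $\overline{B}$ contains an infinite computable set. Your construction does nothing to ensure this. Indeed it cannot: since $\mcal{A}$ is arbitrary, it may contain (say) a $1$-random real, or simply $\emptyset'$; your $A$ then computes that real, and hence computes a bi-immune set, so its degree \emph{is} bi-immune.

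This is exactly why the paper's proof is a genuine forcing construction rather than a direct coding. The paper builds the covering test $\mbV^*$ as a path through a computable tree of test initial segments, using conditions $(\vec{V},S)$ with $S$ a computable subtree; the key requirement (\ref{scheq7}) keeps enough freedom at alternating levels so that $\mbV^*$ can still cover every $A_s\in\mcal{A}$, while for each Turing functional $\Psi$ a case split produces an extension forcing $\Psi^{\mbV^*}$ (if total) either to be finite or to be disjoint from some explicitly constructed infinite computable set. That diagonalisation over all $\Psi$ is precisely what is missing from your proposal, and it cannot be recovered by any choice of coding set $S$ alone.
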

\begin{proof}
A \emph{test} is a sequence of finite set
$(V_n:n\in\omega)$ with $V_n\subseteq 2^{<\omega}$ such that $\m(V_n)\leq 4^{-n-1}$.
We construct a  test $\mbV^*$
such that $\mbV^*$ cover $\mcal{A}= \{A_s\}_{s\in\omega}$ and
$\mbV^*$, as an oracle, does not compute any bi-immune set.
Let $h:\omega\rightarrow\omega$ be an order function (computable and increasing).
In the following proof, we
restrict ourself to such  test
$\mbV=(V_0,V_1,\cdots)$ that
$V_n\subseteq 2^{h(n)}$.
We use $\v{V}$ to denote an initial segment of a test,
i.e., $\v{V}=(V_0,\cdots,V_n)$ for some $n$
and write $\v{V}(m)$ to denote the $m$-th component of $\v{V}$,
$|\v{V}|$ to denote the length of $\v{V}$.
For two initial segment of  test $\v{V}_0,\v{V}_1$,
we write $\v{V}_1\succeq\v{V}_0$ if
$\v{V}_0 = (V_0,\cdots,V_n)\wedge
\v{V}_1 = (V_0,\cdots,V_{n'})$ for some $n'\geq n$
similarly for notation $[\v{V}]^\preceq, [\v{V}]$.
We use bold face $\mbV$ to denote a  test
and let $\mST$ be the set of all initial segment of such
  test.
Note that in our setting, $\mST$ can be seen as
a computably bounded computable tree.

The \emph{condition} we use is a pair
$(\v{V},S)$ where $S\subseteq [\v{V}]^\preceq$ is a computable
tree of  test initial segment with no leaf
such that for every $\v{V}'\in S$ with
$|\v{V}'|_S $ being even, we have that
\begin{align}\label{scheq7}
\v{V}'^\smallfrown V\in S
\text{ for all }
V\text{ such that }
\v{V}'^\smallfrown V\in \mST.
\end{align}
Condition $(\v{V}',S')$ \emph{extends}
$(\v{V},S)$ (written as $(\v{V}',S')\leq
(\v{V},S)$ ) if $\v{V}'\in S\wedge S'\subseteq S$.
It is clear how (\ref{scheq7})  guarantee
that $\mbV^*$ covers $\mcal{A}$.
Fix a Turing functional and a condition
$(\v{V},S)$, it remains to show that we can extends
the condition to $(\v{V}^*,S^*)$
so that for every $\mbV\in [S^*]$,
$\Psi^\mbV$ is not bi-immune.

\textbf{Case 1.}
For every finitely many mutually incomparable
 $\v{V}_0,\cdots,\v{V}_{M-1}\in S$,
every $n$, there exists $n'>n$ and
$\v{V}'_m\in [\v{V}_m]^\preceq\cap S$ for each $m\leq M-1$ such that
$\Psi^{\v{V}'_m}(n')\downarrow=1$ for all $m\leq M-1$.

We inductively (and computably) define
a subset $S^*$ of $S$ together with a computable
set $A$ so that $A$ witness that $\Psi^\mbV$ is not
bi-immune for all $\mbV\in [S^*]$.
Suppose by time $t$ we have defined
$S^*$ up to level $2l+1$.
Let $\v{V}_0,\cdots,\v{V}_{M-1}$ be all elements
in $S^*$ at level $2l-1$.
Note that by hypothesis of Case 1, there exists
a $n'> A[t] $,
$\v{V}_m'\in [\v{V}_m]^\preceq\cap S$ for each $m\leq M-1$
such that $\Psi^{\v{V}'_m}(n')\downarrow = 1$ for all
$m\leq M-1$. Clearly
such $n'$ and $\v{V}_m'$ can be computed.
Moreover, clearly we may assume without loss of generality
that $|\v{V}'_m|_S$ is even for all $m\leq M-1$
(otherwise extend them to be so) and they are mutually incomparable.
The $2l$ level of $S^*$ consists of $\v{V}'_m,m\leq M-1$,
the $2l+1 $ level of $S^*$ consists of
$\v{V}_m'^\smallfrown V$ for all $m\leq M-1$ and $V$
such that $\v{V}_m'^\smallfrown V\in S$.
Then we  enumerate $n'$ into $A$.
It is easy to check that $(\v{V},S^*)$ is  the desire extension

\textbf{Case 2}. Otherwise.

Suppose $\v{V}_0,\cdots,\v{V}_{M-1}\in S, n\in\omega$ witness
the otherwise hypothesis, i.e.,
there exists no $n'>n$ and
$\v{V}'_m\in [\v{V}_m]^\preceq\cap S$ for each $m\leq M-1$ such that
$\Psi^{\v{V}'_m}(n')\downarrow=1$ for all $m\leq M-1$.
Moreover, suppose $\v{V}_0,\cdots,\v{V}_{M-1}$ is minimal
in the sense that no (actual) subset of
 $\v{V}_0,\cdots,\v{V}_{M-1}$ can be a witness.
 Note that if $M=1$, then the hypothesis of Case 2 means
 that for every $\mbV\in [\v{V}_0]^\preceq\cap S$,
 $\Psi^{\mbV}\subseteq \{0,\cdots,n\}$ if it is total.
 Thus let $\v{V}^*=\v{V}_0$ and
 let $S^*\subseteq [\v{V}_0]^\preceq\cap S$ be a computable
 tree so that $(\v{V}^*,S^*)$ is a condition, then
 it is clear that this condition forces $\Psi^G$
 to be finite.
 If $M>1$, which means
 $\v{V}_1,\cdots,\v{V}_{M-1}$ is not a witness for the otherwise hypothesis,
 then as in Case 1, we can compute an infinite
 set $A$ such that for every $n'\in A$,
 there exists $\v{V}'_m\in [\v{V}_m]^\preceq\cap S$
 for each $1\leq m\leq M-1$ such that
 $\Psi^{\v{V}'_m}(n')\downarrow= 1$. This means that
 for every $n'\in A$ and every $\v{V}'\in [\v{V}_0]^\preceq\cap S$,
 $\Psi^{\v{V}'}(n')\uparrow\vee \Psi^{\v{V}'}(n')=0$.
 Thus let $\v{V}^* = \v{V}_0$ and let
 $S^*\subseteq [\v{V}_0]^\preceq\cap S$ be a computable
 tree so that $(\v{V}^*,S^*)$ is a condition, then
 it is clear that for every $\mbV\in [S^*]$,
 $\Psi^\mbV\cap A=\emptyset$ if it is total. Thus we are done.

\end{proof}
\bibliographystyle{amsplain}
\bibliography{F:/6+1/Draft/bibliographylogic}

\providecommand{\bysame}{\leavevmode\hbox to3em{\hrulefill}\thinspace}
\providecommand{\MR}{\relax\ifhmode\unskip\space\fi MR }
\providecommand{\MRhref}[2]{%
  \href{http://www.ams.org/mathscinet-getitem?mr=#1}{#2}
}
\providecommand{\href}[2]{#2}
\begin{thebibliography}{10}

\bibitem{brendle2015analogy}
J{\"o}rg Brendle, Andrew Brooke-Taylor, Keng~Meng Ng, and Andr{\'e} Nies,
  \emph{An analogy between cardinal characteristics and highness properties of
  oracles}, Proceedings of the 13th Asian Logic Conference, World Scientific,
  2015, pp.~1--28.

\bibitem{Cholak2001strength}
Peter~A. Cholak, Carl~G. Jockusch, and Theodore~A. Slaman, \emph{{On the
  strength of Ramsey's theorem for pairs}}, Journal of Symbolic Logic
  \textbf{66} (2001), no.~01, 1--55.

\bibitem{Downey2010Algorithmic}
Rodney~G. Downey and Denis~R. Hirschfeldt, \emph{Algorithmic randomness and
  complexity}, Springer, 2010.

\bibitem{Hirschfeldt2008strength}
Denis~R. Hirschfeldt, Carl~G. Jockusch, Bj{\o}rn Kjos-Hanssen, Steffen Lempp,
  and Theodore~A. Slaman, \emph{{The strength of some combinatorial principles
  related to {R}amsey's theorem for pairs}}, Computational Prospects of
  Infinity, Part II: Presented Talks, World Scientific Press, Singapore (2008),
  143--161.

\bibitem{khan2017forcing}
Mushfeq Khan and Joseph~S Miller, \emph{Forcing with bushy trees}, Bulletin of
  Symbolic Logic \textbf{23} (2017), no.~2, 160--180.

\bibitem{Kjos-Hanssen2009Infinite}
Bj{\o}rn Kjos-Hanssen, \emph{Infinite subsets of random sets of integers},
  Mathematics Research Letters \textbf{16} (2009), 103--110.

\bibitem{kjos2019extracting}
Bj{\o}rn Kjos-Hanssen and Lu~Liu, \emph{Extracting randomness within a subset
  is hard}, European Journal of Mathematics (2019), 1--14.

\bibitem{Kjos-Hanssen2011Kolmogorov}
Bj{\o}rn Kjos-Hanssen, Wolfgang Merkle, and Frank Stephan, \emph{Kolmogorov
  complexity and the recursion theorem}, {Transactions of the American
  Mathematical Society} \textbf{363} (2011), no.~10, 5465--5480.

\bibitem{monin2019pigeons}
Benoit Monin and Ludovic Patey, \emph{Pigeons do not jump high}, Advances in
  Mathematics \textbf{352} (2019), 1066--1095.

\bibitem{Nies2009Computability}
Andr\'{e} Nies, \emph{{Computability and Randomness}}, Oxford Logic Guides,
  vol.~51, Oxford University Press, 2009.

\bibitem{Rupprecht2010}
Nicholas Rupprecht, \emph{Effective correspondents to carcindal
  characteristicsin chichon’s diagram},  (2010).

\end{thebibliography}

\end{document}